\newtheorem{theorem}{Theorem}[section]
\newtheorem{proposition}[theorem]{Proposition}
\newtheorem{corollary}[theorem]{Corollary}
\newtheorem{definition}[theorem]{Definition}
\newtheorem{remark}[theorem]{Remark}
\title{Complete permutation polynomials from exceptional polynomials}
\date{}
\author{D. Bartoli, M. Giulietti, L. Quoos, and G. Zini}
\begin{document}
\maketitle

\begin{abstract} 
We classify complete permutation monomials of degree $\frac{q^n-1}{q-1}+1$ over the finite field with $q^n$ elements, for $n+1$ a prime and $(n+1)^4 < q$. As a corollary, a conjecture by Wu, Li, Helleseth, and Zhang is proven. When $n+1$ is a power of the characteristic we provide some new examples. Indecomposable exceptional polynomials of degree $8$ and $9$ are also classified.\end{abstract}

{\bf Keywords:} Permutation polynomials, complete permutation polynomials, exceptional polynomials, bent-negabent boolean functions.

\section{Introduction}

Let $\mathbb{F}_{\ell}$ denote the finite field of order $\ell$ and characteristic $p$. A \emph{permutation polynomial} (or PP) $f(x) \in  \mathbb{F}_\ell[x]$ is a bijection of $\mathbb{F}_\ell$ onto itself.
If $f(x)\in\mathbb F_{\ell}$ is a permutation polynomial over $\mathbb F_{\ell^m}$ for infinitely many $m$, then $f(x)$ is said to be an \emph{exceptional polynomial} over $\mathbb F_{\ell}$.
A polynomial $f(x) \in \mathbb{F}_\ell[x]$ is a \emph{complete permutation polynomial} (or CPP) of $\mathbb F_{\ell}$ if both $f(x)$ and $f(x) + x$ are permutation polynomials of $\mathbb{F}_\ell$. 

CPPs are also related to bent and negabent functions which are studied for a number of applications in cryptography, combinatorial designs, and coding theory; see for instance \cite{SGCG,MN,C,ZQ2015}.




The most studied class of CPPs is the monomial one. If there exists a complete permutation monomial of degree $d$ over $\mathbb F_\ell$, then $d$ is called a CPP exponent over $\mathbb F_\ell$. Complete permutation monomials have been investigated in a number of recent papers, especially for $\ell=q^n$ and  $d=\frac{\ell-1}{q-1}+1$.  Note that  for an element $\alpha\in \mathbb{F}_\ell^*$, the monomial $\alpha x^d$  is a CPP of $\mathbb F_\ell$ if and only if $\gcd(d, \ell -1) = 1$ and $\alpha x^d +x$ is a PP of $\mathbb F_\ell$. 
In \cite{BZ,BZ2015,WLHZ,WLHZ2015} PPs of type $f_b(x)=x^{\frac{q^n-1}{q-1}+1} +bx$ over $\mathbb{F}_{q^n}$ are thoroughly investigated  for $n=2$, $n=3$, and $n=4$. For $n=6$,
 sufficient conditions  for $f_b$ to be a PP of $\mathbb{F}_{q^6}$ are provided in \cite{WLHZ,WLHZ2015} in the special cases of  characteristic  $p\in\{2,3,5\}$, whereas in \cite{BGZ} all $a$'s for which $ax^{\frac{q^6-1}{q-1}+1}$ is a CPP over $\mathbb{F}_{q^6}$ are explicitly listed. The case $p=n+1$ is dealt with in \cite{MZFG}. 

In this paper we discuss monomials of degree $d=\frac{q^n-1}{q-1}+1$ for general $n$. The starting point of our investigation is the observation that $b^{-1}x^d \in \mathbb F_{q^n}[x]$ is a CPP of $\mathbb F_{q^n}$ if and only if $b,b^q,\ldots,b^{q^{n-1}}$ are the roots of 
$$
v_g(x)=\frac{g(-x)-g(0)}{-x}  \in \mathbb F_q[x]
$$
for some permutation polynomial $g(x)$ of degree $n+1$ over $\mathbb F_q$ such that the first-degree term is not zero. If for a root $b$ of $v_g(x)$ the monomial $b^{-1}x^d$ is a CPP over $\mathbb F_{q^n}$, then $g(x)$ will be called a {\em good} PP over $\mathbb F_q$; in this case, all roots of $v_g(x)$ have the same property. Clearly, a PP $g(x)$ over $\mathbb F_q$ is good if and only if the roots of $v_g(x)$ in the algebraic closure of $\mathbb F_q$ form a unique orbit under the action of the Frobenius map $x\mapsto x^q$. 

Our aim is to classify good permutation polynomials over $\mathbb F_{q} $. Here we achieve this goal for all $n$, $(n+1)^4 <q$, with the exception of the cases $n+1=p^r$, with $r>1$, and $n+1=p^r(p^r-1)/2$, with $p\in\{2,3\}$. For $n+1=p^{r}$ we provide several examples. 
Proposition \ref{dickson} shows that, if $q=p^k$ and $n+1$ is a prime different from $p$ satisfying $\gcd(n,k)=\gcd(n+1,p^2-1)=1$, then $d=\frac{q^n-1}{q-1}+1$ is a CPP exponent over $\mathbb F_{q^n}$.
This solves a conjecture by Wu, Li, Helleseth, and Zhang, see \cite[Conjecture 4.18 and Proposition 4.19]{WLHZ2015}.
Our classification implies a result by Bhattacharya and Sarkar (see \cite[Theorem 1.1]{BS}) which determines the PPs of type $f_b$ when $p=2$, $n$ is a power of $2$, and $b\in\mathbb{F}_{q^2}$.


Note that since every permutation polynomial with degree less than $q^{1/4}$ is exceptional (see \cite[Theorem 8.4.19]{MP2013}), condition $(n+1)^4<q$ allows us to consider only exceptional polynomials. 
A key tool in our investigation is the classification of indecomposable exceptional polynomials of degree different from $p^r$ for some $r>1$; see \cite[Section 8.4]{MP2013}.


If $g(x)$ is a good PP over $\mathbb F_q$ then it is easily seen that $c\cdot g(c'x)+e$ is a good PP over $\mathbb F_q$ for each $c,c',e\in \mathbb F_q$ with $cc'\neq 0$. In this paper two PPs $g(x)$ and $h(x)$ over $\mathbb F_q$ will be called \emph{CPP-equivalent} if there exist $c,c',e\in \mathbb F_q$ with $cc'\neq 0$ such that $h(x)=c\cdot g(c'x)+e$. Note that for $g(x)$ a PP over $\mathbb F_q$ and $k\in\mathbb F_q$, the permutation polynomials $g(x+k)$ and $g(x)$ are equivalent in the usual sense but not CPP-equivalent; in fact, it's possible that one of them is good but the other is not.
Note that, when $g'(x)$ ranges over the CPP-equivalence class of $g(x)$, the roots of $v_{g'}(x)$ range over the roots of $v_g(x)$ and their multiples by non-zero elements in $\mathbb F_q$.
We will consider only one polynomial in a CPP-equivalence class. In particular, we  assume that $g(x)$ is monic and that $g(0)=0$. Since exceptional polynomials only exist for degrees coprime with $q-1$, when  $n$ is odd  we assume that $p=2$.

Our first result is that if $g$ is decomposable, that is $g$ is a composition of two exceptional polynomials with degree grater than one, then $g$ is not good; see Proposition \ref{Indecomponibili}. 

If $g(x) \in \mathbb{F}_{q}[x]$ is a monic indecomposable exceptional polynomial of degree $n+1$ with $g(0)=0$, then, up to CPP-equivalence, one of the following holds \cite[Section 8.4]{MP2013}.
\begin{itemize}
\item[A)] $n+1$ is a prime different from $p$ not dividing $q-1$, and
\begin{enumerate}
\item[A1)] $g(x)=(x+e)^{n+1}-e^{n+1}$, with $e\in \mathbb F_q$, or
\item[A2)] $g(x)=D_{n+1}(x+e,a)-D_{n+1}(e,a)$,  where $a,e\in \mathbb F_q$, $a\ne0$, $n+1\nmid q^2-1$, and $D_{n+1}(x,a)$ denotes a Dickson polynomial of degree $n+1$.
\end{enumerate}

\item[B)] 
 $n+1=p$ and $g(x)=(x+e) ((x+e)^{\frac{p-1}{r}}-a)^r-e(e^{\frac{p-1}{r}}-a)^r$, with $r\mid p-1$, $a,e\in \mathbb F_q$, and $a^{r(q-1)/(p-1)}\neq 1$.

\item[C)] $n+1=s(s-1)/2$, 
where $p\in \{2,3\}$, $q=p^m$,  $s=p^r>3$, and $(r,2m)=1$.

\item[D)] $n+1=p^r$ with $r>1$.
\end{itemize}

For the case $n+1=p^r$, $r>1$, Guralnick and Zieve conjectured in \cite{GZ} that there are no examples of indecomposable exceptional polynomials other than those described in \cite[Propositions 8.4.15, 8.4.16, 8.4.17]{MP2013}.

The paper is organized as follows.
We classify good exceptional polynomials of type A) and B) in Sections \ref{section:A} and \ref{section:B}; see Theorems \ref{typeA} and \ref{Caso_primo}.
We show in Section \ref{section:C} that certain exceptional polynomials of type C) are not good; see Proposition \ref{typeC}. We describe in Section \ref{section:D} some good exceptional polynomials of type D); see Propositions \ref{linearizzati}, \ref{lin2}, and \ref{lin3}.
Finally, we  determine all the exceptional polynomials of degree $8$ and $9$ (see Propositions \ref{$n=7$} and \ref{exceptional}); in this way we provide a proof of the above mentioned Guralnick-Zieve conjecture for the special cases $n=7,8$. As a byproduct, we obtain all the CPPs with $n+1=8$ and $n+1=9$; see Corollaries \ref{coro1} and \ref{coro2} in Section \ref{section:89}.

\section{Preliminaries}

Throughout the paper, $q$ will be a power $p^m$ of a prime $p$ and $\zeta_s$ will denote a $s$-th primitive root of unity, for $s \geq 1$. We begin this section by rephrasing a result by Wu, Li, Helleseth, and Zhang \cite{WLHZ2013}.
For $b\in \mathbb F_{q^n}$, let
$
A_i(b)\in \mathbb F_q
$
denote the evaluation of the $i$-th elementary symmetrical polynomial in $b,b^q,\ldots,b^{q^{n-1}}$, that is,
$$A_i(b)=\sum_{0\le j_1<j_2<\ldots<j_i\le n-1}b^{q^{j_1}+q^{j_2}+\ldots+q^{j_i}}.$$
As a  matter of notation let $A_0(b)=1$.
Recall that $b,b^q,\ldots,b^{q^{n-1}}$ are the roots of the polynomial
$$
(-1)^nA_n(b)+(-1)^{n-1}A_{n-1}(b)T+\ldots+(-1)^{n-i}A_{n-i}(b)T^{i}+\ldots+T^n.
$$
By  \cite[Lemma 5]{WLHZ2013} we have the following result.
\begin{proposition}\label{CPPexceptional}
Assume that $(n+1)^4<q$. The monomial $b^{-1}x^{\frac{q^n-1}{q-1}+1}$ is a CPP over $\mathbb F_{q^n}$ if and only if 
$\gcd(n+1,q-1)=1$ and $\sum_{i=0}^n A_{n-i}(b)x^{i+1}$ is an exceptional polynomial over  $\mathbb F_{q}$.
\end{proposition}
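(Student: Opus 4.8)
The plan is to unwind the definition of a CPP into its two permutation requirements, treat each one separately, and reserve the hypothesis $(n+1)^4<q$ for the very last step.

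Since $b^{-1}x^{d}$ is a monomial up to the nonzero scalar $b^{-1}$, with $d=\frac{q^n-1}{q-1}+1$, it permutes $\mathbb F_{q^n}$ precisely when $\gcd(d,q^n-1)=1$, and I would compute this gcd directly. Writing $M=\frac{q^n-1}{q-1}=1+q+\cdots+q^{n-1}$, we have $q^n-1=(q-1)M$ and $d=M+1$; since $\gcd(M+1,M)=1$, a prime power dividing $M+1$ divides $(q-1)M$ if and only if it divides $q-1$, so $\gcd(d,q^n-1)=\gcd(M+1,q-1)$, and $M\equiv n\pmod{q-1}$ turns this into $\gcd(n+1,q-1)$. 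Thus the first requirement is exactly the condition $\gcd(n+1,q-1)=1$.

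For the second requirement I note that multiplication by $b\neq0$ preserves permutations, so $b^{-1}x^{d}+x$ permutes $\mathbb F_{q^n}$ if and only if $x^{d}+bx$ does, and here I would invoke \cite[Lemma 5]{WLHZ2013}, whose mechanism is a norm reduction. Writing $\mathrm N(x)=x^{(q^n-1)/(q-1)}$ for the norm onto $\mathbb F_q$, one has $x^{d}=x\,\mathrm N(x)$, so on each fiber $\mathrm N^{-1}(c)$ with $c\in\mathbb F_q^{*}$ the map $x\mapsto x^{d}+bx$ acts as multiplication by the constant $c+b\in\mathbb F_{q^n}$ and carries that fiber into $\mathrm N^{-1}(\psi(c))$, where
\[
\psi(c)=c\,\mathrm N(c+b)=c\prod_{j=0}^{n-1}(c+b^{q^j})=\sum_{i=0}^{n}A_{n-i}(b)\,c^{i+1},
\]
using $\mathrm N(c+b)=\prod_{j=0}^{n-1}(c+b^{q^j})$ together with $\prod_{j}(T+b^{q^j})=\sum_{i}A_i(b)T^{n-i}$. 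Hence $\psi$ is exactly the polynomial $G(x):=\sum_{i=0}^{n}A_{n-i}(b)x^{i+1}$ of the statement. Since every fiber has the same cardinality and the map is injective on each of them, a standard counting argument gives that $x^{d}+bx$ permutes $\mathbb F_{q^n}$ if and only if $G$ permutes $\mathbb F_q$. The apparently degenerate case $b\in\mathbb F_q$ needs no separate treatment: if $b\in\mathbb F_q^{*}$ then $-b$ is a nonzero $\mathbb F_q$-root of $G$ besides $0$, so $G$ fails to be injective and both sides fail simultaneously.

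Finally I would upgrade ``permutation'' to ``exceptional''. The polynomial $G$ has degree $n+1$, and the hypothesis gives $n+1<q^{1/4}$; by \cite[Theorem 8.4.19]{MP2013} every permutation polynomial of $\mathbb F_q$ of degree below $q^{1/4}$ is exceptional, while conversely an exceptional polynomial over $\mathbb F_q$ is in particular a PP of $\mathbb F_q$. Hence ``$G$ is a PP of $\mathbb F_q$'' is equivalent to ``$G$ is exceptional over $\mathbb F_q$'', and combining this with the two requirements above yields the stated equivalence. The main obstacle is the norm reduction underlying \cite[Lemma 5]{WLHZ2013}, namely identifying the induced map on norm values with $G$; the remaining steps are the elementary gcd computation and the degree bound.
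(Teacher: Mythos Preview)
Your proposal is correct and follows essentially the same route as the paper, which simply cites \cite[Lemma 5]{WLHZ2013} for the norm-reduction equivalence and then relies on \cite[Theorem 8.4.19]{MP2013} (already flagged in the introduction) to pass from ``PP'' to ``exceptional'' under the hypothesis $(n+1)^4<q$. You have unpacked exactly these two ingredients, adding the elementary gcd computation that the paper leaves implicit; there is nothing to correct.
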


Let $g(x)=\sum_{i=0}^{n+1}\lambda_{n+1-i}x^i$ be an exceptional polynomial over $\mathbb F_q$, and assume that $\lambda_n\neq 0$ and $\lambda_0=1$. Consider the polynomial 
$$
h_g(x)=\frac{g(x)-g(0)}{x}=\frac{\sum_{i=1}^{n+1} \lambda_{n+1-i}x^i}{x}=\sum_{i=0}^{n}\lambda_{n-i}x^i.
$$
Then $v_g(x):=h_g(-x)=\sum_{i=0}^{n}(-1)^i\lambda_{n-i}x^i$.
Note that if $n$ is even, then $h_g(-x)$ can be written as $\sum_{i=0}^{n}(-1)^{n-i}\lambda_{n-i}x^i$. 
If $n$ is odd, then $p=2$ and the same relation holds.

This means that, for any root $b$ of $v_g(x)$, the monomial $b^{-1}x^{\frac{q^n-1}{q-1}+1}$ is a CPP over $\mathbb F_{q^n}$ if and only if 
the roots of $v_g(x)$, or equivalently $h_g(-x)$, form a unique orbit under the Frobenius map $x\mapsto x^q$. This motivates the following definition.

\begin{definition}
An exceptional polynomial $g(x) \in \mathbb{F}_q[x]$ with $g(0)=0$ and $g'(0) \ne 0$ is said to be good if 
the roots of $\frac{g(-x)}{-x}$ form a unique orbit under the Frobenius map $x \mapsto x^q$.
\end{definition}

Therefore, the following has been proved.

\begin{proposition} Assume that $(n+1)^4<q$. Then the elements $b\in\mathbb F_{q^n}\setminus \mathbb F_q$ such that $b^{-1}x^{\frac{q^n-1}{q-1}+1}$ is a CPP over $\mathbb F_{q^n}$ are the roots of polynomials $\frac{g(-x)}{-x}$, for $g$ ranging over good exceptional polynomials of degree $n+1$ over $\mathbb F_q$, with $g(0)=0$ and $g'(0) \ne 0$. 
\end{proposition}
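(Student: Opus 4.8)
The plan is to prove the asserted set equality by making explicit the correspondence $b\mapsto P_b(x):=\sum_{i=0}^n A_{n-i}(b)\,x^{i+1}$ furnished by Proposition~\ref{CPPexceptional}, and checking that it is inverse (up to the coefficient identity already recorded before the statement) to the map sending a good exceptional $g$ to the root set of $\frac{g(-x)}{-x}$. First I would record the basic features of $P_b$ for $b\in\mathbb F_{q^n}$: since $A_0(b)=1$ it is monic of degree $n+1$, it has $P_b(0)=0$, and its linear coefficient $A_n(b)=b\,b^{q}\cdots b^{q^{n-1}}=N_{\mathbb F_{q^n}/\mathbb F_q}(b)$ is nonzero whenever $b\neq 0$, so $P_b'(0)\neq0$. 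Writing $P_b$ in the form $\sum_i\lambda_{n+1-i}x^i$ gives $\lambda_k=A_k(b)$, so by the parity computation carried out just before the statement one has $\frac{P_b(-x)}{-x}=\sum_{i=0}^n(-1)^{n-i}A_{n-i}(b)x^i=\prod_{i=0}^{n-1}(x-b^{q^i})$; hence its roots are exactly the Frobenius conjugates of $b$.

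With this in hand the two inclusions are short. For $\subseteq$, if $b\in\mathbb F_{q^n}\setminus\mathbb F_q$ and $b^{-1}x^{\frac{q^n-1}{q-1}+1}$ is a CPP, then Proposition~\ref{CPPexceptional} gives that $g:=P_b$ is exceptional; by the previous paragraph $g(0)=0$, $g'(0)\neq0$, and the roots of $\frac{g(-x)}{-x}$ are $\{b,b^q,\dots,b^{q^{n-1}}\}$, a set closed under $x\mapsto x^q$ and containing $b$, hence a single Frobenius orbit. Thus $g$ is good and $b$ is one of its roots. For $\supseteq$, let $g$ be good exceptional of degree $n+1$ with $g(0)=0$, $g'(0)\neq0$, and let $b$ be a root of $v_g=\frac{g(-x)}{-x}$. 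Since the roots form a single orbit, the orbit size $m$ equals the degree of $b$ over $\mathbb F_q$ and Frobenius assigns every root the same multiplicity $n/m$, so $m\mid n$ and $b\in\mathbb F_{q^n}$; matching the monic, constant-term-free coefficients then forces $v_g=\prod_{i=0}^{n-1}(x-b^{q^i})$, that is $g=P_b$. As $g$ is exceptional its degree $n+1$ is prime to $q-1$, so $\gcd(n+1,q-1)=1$, and Proposition~\ref{CPPexceptional} yields that $b^{-1}x^{\frac{q^n-1}{q-1}+1}$ is a CPP.

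The step needing the most care---and the real obstacle to a purely formal argument---is ensuring that the root $b$ obtained in the $\supseteq$ direction lies in $\mathbb F_{q^n}\setminus\mathbb F_q$ and not in $\mathbb F_q$, i.e.\ ruling out the orbit size $m=1$. For this I would use that an exceptional polynomial is a permutation of $\mathbb F_q$ (being injective on some extension $\mathbb F_{q^{m'}}$, it is injective on the subfield $\mathbb F_q$). Now $b\neq0$, since $v_g(0)=g'(0)\neq0$, and $g$ vanishes at the two distinct points $0$ and $-b$; were $b\in\mathbb F_q$ these would both lie in $\mathbb F_q$, contradicting the injectivity of $g$ on $\mathbb F_q$. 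Hence $m\geq 2$ and $b\in\mathbb F_{q^n}\setminus\mathbb F_q$, as required. The remaining ingredients---the $(-1)^n$ sign bookkeeping and the passage between the elementary-symmetric description of the conjugates of $b$ and the coefficients of $g$---are exactly the elementary computations already displayed before the statement, and require only that one keep track of the parity of $n$ (with $n$ odd forcing $p=2$, so that the signs disappear).
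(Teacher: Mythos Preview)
Your proof is correct and follows essentially the same route as the paper, which deduces the proposition directly from Proposition~\ref{CPPexceptional} together with the coefficient identity $v_g(x)=\sum_i(-1)^{n-i}\lambda_{n-i}x^i$ displayed just before the statement. Your write-up is in fact more careful than the paper's: you make both inclusions explicit, and you supply the argument that roots of $v_g$ for good exceptional $g$ cannot lie in $\mathbb F_q$ (using that an exceptional polynomial is injective on $\mathbb F_q$), a point the paper leaves implicit.
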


Note that $h_g(x)$ can be viewed as the bivariate polynomial $\frac{g(x)-g(y)}{x-y}$ evaluated at $y=0$. So, assume that we know the factorization of $\frac{g(x)-g(y)}{x-y}$ into absolutely irreducible factors defined over the algebraic closure of $\mathbb F_q$, say
 $\frac{g(x)-g(y)}{x-y}=\prod_{k=1}^s \ell_k(x,y)$. Then
 $$
 h_g(x)=\prod_{k=1}^s \ell_k(x,0).
 $$ 
Obviously, this can be extremely useful to establish whether an exceptional polynomial $g$ is good or not.
Recall that an exceptional polynomial $g(t)$ is decomposable if there exist exceptional polynomials  $g_1$, $g_2$ with degree greater than $1$ such that $g(x)=g_1(g_2(x))$.  
 
\begin{proposition}\label{Indecomponibili}
If $g(x)$ is a good exceptional polynomial, then $g(x)$ is not decomposable.
\end{proposition}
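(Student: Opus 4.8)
The plan is to argue by contradiction, exploiting that the constituents of a decomposition commute with the Frobenius map. Suppose a good $g$ were decomposable, say $g=g_1\circ g_2$ with $g_1,g_2$ exceptional of degrees $a=\deg g_1\ge 2$ and $c=\deg g_2\ge 2$, so that $ac=n+1$. Since $g$ is exceptional it is separable, hence $h_g(x)=g(x)/x$ (recall $g(0)=0$ and $g'(0)\ne 0$) has exactly $n$ distinct nonzero roots. Goodness then means these $n$ roots form a single Frobenius orbit, so any root $\theta$ of $h_g$ has degree exactly $n$ over $\mathbb F_q$ and $\{\theta,\theta^q,\ldots,\theta^{q^{n-1}}\}$ is the entire root set of $h_g$.

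Next I would use that $g_2\in\mathbb F_q[x]$ commutes with $x\mapsto x^q$. Set $\beta=g_2(\theta)$. Because $g_1(g_2(\theta))=g(\theta)=0$ and $\theta\ne 0$, the element $\beta$ is a root of $g_1$; write $c_1,\ldots,c_a$ for the (distinct, by separability) roots of $g_1$, and note $c_1:=g_2(0)$ is among them since $g_1(g_2(0))=g(0)=0$. Applying $g_2$ to the orbit of $\theta$ gives $g_2(\theta^{q^i})=\beta^{q^i}$, so the image under $g_2$ of the root set of $h_g$ is precisely the Frobenius orbit of $\beta$, which is contained in $\{c_1,\ldots,c_a\}$. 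Equivalently, this reflects the factorization $h_g(x)=\bigl(\prod_{j\ge 2}(g_2(x)-c_j)\bigr)\cdot\frac{g_2(x)-c_1}{x}$ obtained from the displayed $h_g(x)=\prod_k\ell_k(x,0)$.

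The crux is to show this image is all of $\{c_1,\ldots,c_a\}$. Each $c_j$ pulls back under $g_2$ to the roots of $g_2(x)-c_j$, all of which are roots of $g$; the nonzero ones are roots of $h_g$. For $c_j\ne c_1$ all $c$ preimages are nonzero, while for the single value $c_1=g_2(0)$ the preimage $x=0$ is discarded, leaving $c-1\ge 1$ nonzero preimages — here I use $c\ge 2$ together with separability of $g_2$, which forbids $g_2(x)-c_1$ from being a pure power of $x$. Hence every $c_j$ is the $g_2$-image of some root of $h_g$, so the orbit of $\beta$ equals $\{c_1,\ldots,c_a\}$ and has size exactly $a$.

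Finally, since $\theta\in\mathbb F_{q^n}$ and $g_2$ has coefficients in $\mathbb F_q$, we have $\beta\in\mathbb F_{q^n}$, so the size $a$ of its orbit divides $n$. But $n=ac-1\equiv -1\pmod a$, forcing $a\mid 1$ and contradicting $a\ge 2$. I expect the main obstacle to be the surjectivity step of the third paragraph: ensuring that the distinguished value $g_2(0)$, whose $g$-fibre loses the root $0$ upon passing to $h_g$, is still attained by a genuine root of $h_g$. This is exactly where the hypotheses $\deg g_2\ge 2$ and separability are needed, and it is what upgrades the orbit of $\beta$ to the full root set of $g_1$, yielding the decisive divisibility $a\mid n$.
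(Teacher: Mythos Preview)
Your argument is correct, and it takes a genuinely different route from the paper's. The paper works purely with the factorization
\[
v_g(x)=\frac{g_2(-x)-g_2(0)}{-x}\cdot\prod_{i=1}^{\deg g_1-1}\bigl(g_2(-x)-\beta_i\bigr),
\]
observes that the first factor lies in $\mathbb F_q[x]$, and argues that the single--orbit hypothesis forces every root of each factor $g_2(-x)-\beta_i$ to already be a root of $g_2(-x)-g_2(0)$, so that $\beta_i=g_2(0)$ for all $i$; this makes $x=0$ a root of $v_g$ of multiplicity $\deg g_1-1>0$, contradicting $g'(0)\ne 0$. Your proof instead pushes the Frobenius orbit forward through $g_2$: you show that $\{c_1,\dots,c_a\}$ is itself a single Frobenius orbit, whence $a\mid[\mathbb F_q(\theta):\mathbb F_q]=n$, which is incompatible with $n=ac-1$. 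The paper's approach is a shade more elementary (no field--degree bookkeeping, and it never needs to know that $h_g$ has $n$ \emph{distinct} roots), while yours is more conceptual and makes transparent the arithmetic obstruction: a nontrivial inner factor $g_2$ would force the roots of $g_1$ to generate an intermediate field $\mathbb F_{q^a}$ that cannot embed in $\mathbb F_{q^{ac-1}}$. One small remark: your sentence ``since $g$ is exceptional it is separable'' is not the right justification---exceptionality alone does not give this---but the hypothesis $g'(0)\ne 0$ in the definition of \emph{good} does, and that is all you actually use (it also yields the separability of $g_2$ needed in the surjectivity step, via $g'(0)=g_1'(g_2(0))\,g_2'(0)$).
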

\begin{proof}
Suppose that $g(x)$ is decomposable and write $g(x)=g_1  (g_2(x))$, with polynomials $g_1,g_2$  such that $\deg(g_1),\deg(g_2)>1$. Then 
$$v_g(x)=\frac{g_1(g_2(-x))-g_1(g_2(0))}{-x}=\frac{g_2(-x)-g_2(0)}{-x}\lambda(g_2(-x)),
$$
with
$$
\lambda(g_2(-x)) = \prod_{i=1}^{\deg(g_1)-1}(g_2(-x)-\beta_i)
$$
for some $\beta_i\in\overline{\mathbb F}_q$.
Since $\frac{g_2(-x)-g_2(0)}{-x}$ is a factor of positive degree defined over $\mathbb{F}_q$, the only possibility for the roots of $v_g(x)$ to form a unique orbit under the Frobenius map is that $v_g(x)$ is a power of $\frac{g_2(-x)-g_2(0)}{-x}$. Note that $0$ cannot be a root of $v_g(x)$, since for $b=0$ the monomial $bx^{\frac{q^n-1}{q-1}+1}$ is not a CPP. On the other hand, any root of a factor $g_2(-x)-\beta_i$ must be a root of $g_2(-x)-g_2(0)$, that is $\beta_i=g_2(0)$.  Therefore, 
$$v_g(x)= \left(\frac{g_2(-x)-g_2(0)}{-x}\right)^{\deg(g_1)} (-x)^{\deg(g_1)-1},$$
which is impossible since $\deg(g_1)>1$.
\end{proof}

\section{CPPs from exceptional polynomials of type A)}\label{section:A}

Throughout this section we assume that $n+1\geq3$ is a prime different from $p$.
We denote by $T_{q^{n/2}}$ the absolute trace map $\mathbb{F}_{q^{n/2}}\to\mathbb{F}_2$, $x\mapsto x+x^2+x^4+\ldots+x^{(q^{n/2})/2}$.
We are going to prove the following result.
\begin{theorem}\label{typeA} Assume that $(n+1)^4<q$. For $i\in\{1,\ldots,n/2\}$ let
$$
\alpha_i=\zeta_{n+1}^i+\zeta_{n+1}^{-i} \text{ and } \beta_{i}=\zeta_{n+1}^i-\zeta_{n+1}^{-i}.
$$
Then the monomial $b^{-1}x^{\frac{q^n-1}{q-1}+1}$ is a CPP of $\mathbb F_{q^n}$ precisely in the following cases.
\begin{itemize}
\item If $p\ne2$:
\begin{enumerate}[i)]
\item the order of $q$ modulo $n+1$ is $n$ and, up to multiplication by a non-zero element in $\mathbb F_q$, $b$ is as follows:
\begin{enumerate}
\item $b=\zeta_{n+1}^i-1$, for some $i \in \{1, \dots , n\}$;
\item for $n/2$ even, $b=e(\alpha_i-2)\pm\sqrt{\beta_i^2(e^2-4a)}$ for some $i \in \{1,\ldots n/2\}$, $a\in \mathbb F_q^*$, and $e \in \mathbb F_q$; 
\item for $n/2$ odd, $b=e(\alpha_i-2)\pm\sqrt{\beta_i^2(e^2-4a)}$ for some $i \in \{1,\ldots n/2\}$, $a\in \mathbb F_q^*$, and $e\in\mathbb F_q$ such that $e^2-4a$ is  a square in $\mathbb F_q$.
\end{enumerate}
\item the order of $q$ modulo $n+1$ is $n/2$, $n$ is not divisible by $4$, 
 and, up to multiplication by a non-zero element in $\mathbb F_q$, $b=e(\alpha_i-2)\pm\sqrt{\beta_i^2(e^2-4a)}$ for some $i \in \{1,\ldots n/2\}$, $a\in \mathbb F_q^*$, and $e \in \mathbb F_q$ such that $e^2-4a$ is $0$ or a non-square in $\mathbb F_q$.
\end{enumerate}
\item
If $p=2$:
\begin{enumerate}[i)]
\item the order of $q$ modulo $n+1$ is $n$ and, up to multiplication by a non-zero element in $\mathbb F_q$, $b=\zeta_{n+1}^i-1$ for some $i\in\{1,\ldots,n\}$;
\item the order of $q$ modulo $n+1$ is $n$ or $n/2$, and
$$ b= z_i:= \varepsilon\delta_i^{2} + (\varepsilon+\varepsilon^2)\delta_i^{4} + \ldots + (\varepsilon+\varepsilon^2+\ldots+\varepsilon^{q^n/4})\delta_i^{q^n/2} \quad \textrm{or} \quad b=z_i+1, $$
where $\varepsilon\in\mathbb{F}_{q^n}$ satisfies $T_{q^{n/2}}(\varepsilon)=1$ and, for some $i\in\{1,\ldots,n\}$, $\delta_i=\frac{1}{\alpha_i}+\frac{a}{e^2}$ and $T_{q^{n/2}}(\delta_i)=1$.
\end{enumerate}
\end{itemize}
\end{theorem}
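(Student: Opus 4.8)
The plan is to reduce the statement, via Proposition \ref{CPPexceptional} and the subsequent reformulation, to the classification of \emph{good} exceptional polynomials $g$ of degree $n+1$ over $\mathbb{F}_q$ with $g(0)=0$ and $g'(0)\ne 0$: the monomial $b^{-1}x^{(q^n-1)/(q-1)+1}$ is a CPP precisely when $b$, up to an $\mathbb{F}_q^*$-multiple, is a root of $v_g(x)=g(-x)/(-x)$ for such a $g$, the defining requirement being that these roots form a single orbit under $x\mapsto x^q$. By Proposition \ref{Indecomponibili} a good $g$ is indecomposable, so for type A) it is one of the two families A1) $g(x)=(x+e)^{n+1}-e^{n+1}$ or A2) $g(x)=D_{n+1}(x+e,a)-D_{n+1}(e,a)$. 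I would treat the two families separately, in each case computing the roots of $v_g$ explicitly and then deciding transitivity of Frobenius by a group-theoretic count in the cyclic group $(\mathbb{Z}/(n+1)\mathbb{Z})^*$, which has order $n$ since $n+1$ is prime.

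For A1) the roots of $g(-x)=0$ other than $0$ are $e(1-\zeta_{n+1}^i)$, $i=1,\dots,n$, so up to normalisation the roots of $v_g$ are $\zeta_{n+1}^i-1$. As $e\in\mathbb{F}_q$, Frobenius sends the index $i$ to $qi \bmod (n+1)$, and the roots form a single orbit exactly when $q$ is a primitive root modulo $n+1$, i.e. $\mathrm{ord}_{n+1}(q)=n$; this yields case (i)(a). For A2) I would write $e=w+a/w$, so that $w=(e+\sqrt{e^2-4a})/2$ and $a/w=(e-\sqrt{e^2-4a})/2$, and use the description of the solutions of $D_{n+1}(u,a)=D_{n+1}(e,a)$ to obtain the roots $b_j=e-\bigl(w\zeta_{n+1}^{\,j}+(a/w)\zeta_{n+1}^{-j}\bigr)$ for $j=1,\dots,n$. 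A short manipulation with $\alpha_i$ and $\beta_i$ rewrites these as $e(\alpha_i-2)\pm\sqrt{\beta_i^2(e^2-4a)}$, matching the statement, with the two signs accounting for the pairing $j\leftrightarrow n+1-j$.

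The heart of the argument is then purely arithmetic. In odd characteristic the Frobenius action on the index $j$ splits according to whether $w\in\mathbb{F}_q$: if $e^2-4a$ is a square, so $w\in\mathbb{F}_q$, Frobenius acts by $j\mapsto qj$, whereas if $e^2-4a$ is a non-square, so $w^q=a/w$, it acts by $j\mapsto -qj$. Thus A2) is good iff $\langle q\rangle$, respectively $\langle -q\rangle$, is the whole of $(\mathbb{Z}/(n+1)\mathbb{Z})^*$. Writing $d=\mathrm{ord}_{n+1}(q)$ and using that $-1$ is the unique involution of the cyclic group, equal to $q^{d/2}$ when $d$ is even, I would compute $\mathrm{ord}_{n+1}(-q)$ in terms of $d$ and the parity of $n/2$. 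This gives: for $d=n$ both choices of the discriminant work when $n/2$ is even (case (i)(b)), while only the square case works when $n/2$ is odd (case (i)(c)); and $\mathrm{ord}_{n+1}(-q)=n$ with $d<n$ forces $d=n/2$ odd, i.e. $4\nmid n$, which is case (ii). The degenerate value $e^2-4a=0$, where the two roots coincide and $v_g$ acquires repeated factors, must be isolated and checked on its own.

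The main obstacle is the characteristic-two analysis. There $e^2-4a=e^2$ is always a square, so the quadratic-residue dichotomy is useless; instead, substituting $w=ew'$ turns $w+a/w=e$ into the Artin--Schreier equation $w'^2+w'+a/e^2=0$, whose solvability over $\mathbb{F}_q$, and over the intermediate field $\mathbb{F}_{q^{n/2}}$, is governed by the additive trace rather than by a norm. The explicit good exponents $z_i$ and $z_i+1$ in the statement are precisely the two Artin--Schreier solutions over $\mathbb{F}_{q^n}$, and the conditions $T_{q^{n/2}}(\varepsilon)=1$ and $T_{q^{n/2}}(\delta_i)=1$ are exactly what guarantees that the corresponding $b$ lies in $\mathbb{F}_{q^n}\setminus\mathbb{F}_{q^{n/2}}$ with $n$ distinct conjugates. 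Carrying out this explicit solution and matching the trace conditions with the transitivity count, now allowing $\mathrm{ord}_{n+1}(q)\in\{n,n/2\}$, is the technically delicate step; the odd-characteristic cases, by contrast, reduce to the clean group-theoretic bookkeeping described above.
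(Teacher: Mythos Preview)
Your plan is correct and matches the paper's overall architecture: reduce via Propositions~\ref{CPPexceptional} and~\ref{Indecomponibili} to good indecomposable exceptional polynomials of type A1) and A2), and for each family determine when the roots of $v_g$ lie in a single Frobenius orbit. For A1) your argument is identical to the paper's.

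For A2) there is a genuine, if modest, difference in the key computational lemma. The paper does not parameterise the roots via $e=w+a/w$; instead it quotes the Bhargava--Zieve factorisation of $D_{n+1}(x,a)-D_{n+1}(y,a)$ to write $h_g(-x)$ as a product of $n/2$ explicit quadratics in $x$ with coefficients in $\mathbb F_q(\alpha_i)$, and then tests transitivity by checking whether the discriminant $\beta_i^2(e^2-4a)$ is a non-square in $\mathbb F_{q^{n/2}}$, using $\beta_i^{q^{n/2}}=\pm\beta_i$ according to whether $\mathrm{ord}_{n+1}(q)$ is $n/2$ or $n$. Your route via the functional equation $D_{n+1}(w+a/w,a)=w^{n+1}+(a/w)^{n+1}$ and the dichotomy $j\mapsto qj$ versus $j\mapsto -qj$ is more elementary (no external citation needed) and makes the cyclic-group bookkeeping in $(\mathbb Z/(n+1)\mathbb Z)^*$ very transparent; the paper's route, on the other hand, hands you the quadratic factorisation of $h_g(-x)$ for free and reduces the odd-characteristic case to a single squareness test in $\mathbb F_{q^{n/2}}$. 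Both collapse to the same parity analysis of $d=\mathrm{ord}_{n+1}(q)$ and $n/2$ that you outline, and both treat the degenerate case $e^2-4a=0$ and the characteristic-two Artin--Schreier analysis separately, as you do.
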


By Propositions \ref{CPPexceptional} and \ref{Indecomponibili}, the determination of the CPPs of type $b^{-1}x^{\frac{q^n-1}{q-1}+1}$ over $\mathbb F_{q^n}$ relies on the classification of indecomposable exceptional polynomials, which is given in \cite[Section 8.4]{MP2013}. In particular, by \cite[Theorem 8.4.11]{MP2013}, Theorem \ref{typeA} is implied by the results of Sections \ref{typeA1} and \ref{typeA2}.

\subsection{CPPs from exceptional polynomials of type A1)}\label{typeA1}
Throughout this subsection we also assume that $n+1$ does not divide $q-1$. Note that for each $e\neq 0$ the polynomial $g(x)=(x+e)^{n+1}-e^{n+1}$ has a non-zero term of degree one. Also, the $n$ distinct roots of
$
h_g(-x)=\frac{(-x+e)^{n+1}-e^{n+1}}{-x}
$
are
$$
-e(\zeta_{n+1}^i-1),\quad i=1,\ldots,n.
$$
\begin{proposition} Assume that $e \in \mathbb F_q^*$. The polynomial $(x+e)^{n+1}-e^{n+1}$ is a good exceptional polynomial over $\mathbb F_q$ if and only if the order of $q$ modulo $n+1$ is equal to $n$.
\end{proposition}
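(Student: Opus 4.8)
The plan is to unwind the definition of \emph{good} and reduce the whole question to the orbit structure of a single group element. Note first that under the standing hypotheses of the subsection ($n+1$ prime, $n+1\ne p$, $n+1\nmid q-1$, equivalently $\gcd(n+1,q-1)=1$) the polynomial $g$ is automatically exceptional, being a composition of the exceptional monomial $x^{n+1}$ with two affine maps; hence only the goodness of $g$ is in question. Since $-e\in\mathbb F_q^*$ scales the roots of $v_g(x)=h_g(-x)$ by a nonzero constant that is fixed by the Frobenius, I may work directly with the explicitly listed roots $b_i=-e(\zeta_{n+1}^i-1)$, $i=1,\dots,n$, which (as recorded just before the statement) are $n$ distinct nonzero elements. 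By definition $g$ is good precisely when these $n$ roots form a single orbit under $x\mapsto x^q$, so everything reduces to understanding how the Frobenius permutes them.

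First I would compute the Frobenius action on the index set. Since $e\in\mathbb F_q$ and $x\mapsto x^q$ is a field homomorphism of $\overline{\mathbb F}_q$ fixing $\mathbb F_q$ pointwise, one gets $b_i^{\,q}=(-e)(\zeta_{n+1}^{\,iq}-1)=b_{iq}$, where the subscript is read modulo $n+1$. As $n+1$ is prime and different from $p$, the exponents $\{1,\dots,n\}$ are exactly the nonzero residues $(\mathbb Z/(n+1)\mathbb Z)^*$, and $\gcd(q,n+1)=1$; hence the Frobenius acts on $\{b_1,\dots,b_n\}$ in the same way that multiplication by $q$ acts on the cyclic group $G=(\mathbb Z/(n+1)\mathbb Z)^*$ of order $n$.

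The conclusion is then purely group-theoretic. The orbits of the map $\sigma\colon i\mapsto qi$ on $G$ are exactly the cosets of the cyclic subgroup $\langle q\rangle\le G$, so they all share the common size $\mathrm{ord}_{n+1}(q)$ and there are $n/\mathrm{ord}_{n+1}(q)$ of them. Thus there is a single orbit if and only if $\langle q\rangle=G$, i.e.\ if and only if $q$ has multiplicative order $n$ modulo $n+1$. Combining this with the reduction of the first paragraph yields the claimed equivalence.

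I do not expect any serious obstacle here: the only points requiring care are the bookkeeping that $-e$ is Frobenius-fixed (so that the scaling does not disturb the orbit structure) and the identification of the $\sigma$-orbits on $G$ with cosets of $\langle q\rangle$, both of which are elementary. The substantive input — that the roots are $n$ distinct elements naturally indexed by $(\mathbb Z/(n+1)\mathbb Z)^*$ — has already been established before the statement.
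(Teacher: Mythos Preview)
Your proof is correct and follows essentially the same line as the paper's. The paper compresses the argument into one sentence---the roots form a single Frobenius orbit iff $\zeta_{n+1}$ lies in no proper subfield of $\mathbb F_{q^n}$, i.e.\ iff $\mathrm{ord}_{n+1}(q)=n$---while you unpack this by explicitly computing $b_i^{\,q}=b_{iq}$ and identifying the Frobenius orbits with cosets of $\langle q\rangle$ in $(\mathbb Z/(n+1)\mathbb Z)^*$; the content is the same.
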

\begin{proof}
The roots of $h_g(-x)$ form a unique orbit under the Frobenius map if and only if $\zeta_{n+1}$ does not belong to any proper subfield of $\mathbb F_{q^n}$. This is equivalent to the order of $q$ modulo $n+1$ being equal to $n$. 
\end{proof}

\begin{corollary} 
Assume that the order of $q$ modulo $n+1$ is equal to $n$.
Then for $b=e(\zeta_{n+1}^i-1)$ the monomial $b^{-1}x^{\frac{q^n-1}{q-1}+1}$ is a CPP of $\mathbb F_{q^n}$, for each $e\in \mathbb F_q^*$ and $i\in\{1,\ldots,n\}$.
\end{corollary}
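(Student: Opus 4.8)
The plan is to read off the corollary directly from the Proposition just proved, combined with the correspondence between good exceptional polynomials and CPP monomials established in the Preliminaries. First I would check that $g(x)=(x+e)^{n+1}-e^{n+1}$ meets the normalization required by that correspondence: it has degree $n+1$, satisfies $g(0)=e^{n+1}-e^{n+1}=0$, and has $g'(0)=(n+1)e^{n}\neq0$ because $e\in\mathbb F_q^*$ and $n+1\not\equiv0\pmod p$. Since $n+1$ is prime, distinct from $p$, and does not divide $q-1$ by the standing assumption of this subsection, we also have $\gcd(n+1,q-1)=1$, so the hypothesis of Proposition \ref{CPPexceptional} holds.

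Next I would invoke the hypothesis that the order of $q$ modulo $n+1$ equals $n$: by the Proposition just proved this makes $g(x)$ a good exceptional polynomial over $\mathbb F_q$ for every $e\in\mathbb F_q^*$. Goodness means the roots of $h_g(-x)=\frac{g(-x)}{-x}$ form a single Frobenius orbit of full length $n$, so they all lie in $\mathbb F_{q^n}\setminus\mathbb F_q$; by the correspondence recorded in the Preliminaries, each such root $b$ yields a monomial $b^{-1}x^{\frac{q^n-1}{q-1}+1}$ that is a CPP of $\mathbb F_{q^n}$. These roots were already computed above to be $-e(\zeta_{n+1}^i-1)$ for $i=1,\ldots,n$.

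Finally I would eliminate the sign discrepancy between the computed roots and the statement: as $e$ ranges over all of $\mathbb F_q^*$, so does $-e$, whence every element of the claimed form $b=e(\zeta_{n+1}^i-1)$, with $e\in\mathbb F_q^*$ and $i\in\{1,\ldots,n\}$, arises as a root of some $h_g(-x)$, and therefore $b^{-1}x^{\frac{q^n-1}{q-1}+1}$ is a CPP of $\mathbb F_{q^n}$, as claimed. I do not expect any genuine obstacle here, since the statement is essentially a bookkeeping consequence of the preceding Proposition; the only points needing care are verifying the normalization $g(0)=0$ and $g'(0)\neq0$ so that the correspondence applies, and matching the explicit roots $-e(\zeta_{n+1}^i-1)$ to the stated form via the free parameter $e$.
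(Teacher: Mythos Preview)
Your proposal is correct and matches the paper's approach: the paper states the Corollary with no proof, treating it as an immediate consequence of the preceding Proposition together with the correspondence in the Preliminaries, and your argument simply makes that inference explicit, including the bookkeeping about $g(0)=0$, $g'(0)\neq0$, and absorbing the sign into the free parameter $e$.
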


\subsection{CPPs from exceptional polynomials of type A2)}\label{typeA2}
Throughout this subsection we further assume that $n+1$ does not divide $q^2-1$.
We begin by considering Dickson polynomials $D_{n+1}(x,a)\in\mathbb F_q[x]$.
Recall that 
$$
D_{n+1}(x,a)=\sum_{k=0}^{n/2}\frac{n+1}{n+1-k}\binom{n+1-k}{k}(-a)^kx^{n+1-2k}\,.
$$

Note that $D_{n+1}(x,a)$ has a non-zero term of degree $1$, for each $a\neq 0$. In \cite[Theorems 7 and 8]{BZ1999} Bhargava and Zieve  provide the factorization of $\frac{D_{n+1}(x+e,a)-D_{n+1}(y+e,a)}{x-y}$, $e\in \mathbb{F}_q$. 

\begin{proposition}\label{dickson}
The polynomial $g(x)=D_{n+1}(x+e,a)-D_{n+1}(e,a)$, with $a,e\in\mathbb F_q$, $a\ne0$, and $D'_{n+1}(e,a)\ne 0$, is a good exceptional polynomial over $\mathbb{F}_{q}$ if and only if one of the following cases occurs:
\begin{enumerate}[i)]
\item $p\ne2$, $n/2$ is even and the order of $q$ modulo $n+1$ is $n$;
\item $p\ne2$, $n/2$ is odd and either $e^2-4a$ is $0$ or a non-square in $\mathbb{F}_q$ and the order of $q$ modulo $n+1$ is $n/2$, or  $e^2-4a$ is a square in $\mathbb{F}_q$ and the order of $q$ modulo $n+1$ is $n$;
\item 
$p=2$, the order of $q$ modulo $n+1$ is $n$ or $n/2$, and $T_{q^{n/2}}(\delta_1)=1$, where $\delta_i=\frac{1}{\alpha_i}+\frac{a}{e^2}$.
\end{enumerate}
In Cases \it{i)} and \it{ii)}, the roots of $h_{g}(-x)$ are
$$b=-\frac{1}{2}\cdot\left(e(\alpha_i-2)\pm\sqrt{\beta_i^2(e^2-4a)}\right).$$
In Case \it{iii)}, let $\varepsilon\in\mathbb{F}_{q^n}$ with $T_{q^{n/2}}(\varepsilon)=1$. Then the roots of $h_g(-x)$ are
$$ b=\varepsilon\delta_i^{2} + (\varepsilon+\varepsilon^2)\delta_i^{4} + \ldots + (\varepsilon+\varepsilon^2+\ldots+\varepsilon^{q^n/4})\delta_i^{q^n/2} \quad \textrm{and} \quad b+1.$$
\end{proposition}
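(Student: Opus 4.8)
The plan is to make the roots of $h_g(-x)$ completely explicit and then read off goodness as a transitivity condition for the Frobenius action. The natural tool is the parametrization of Dickson polynomials: writing $D_{n+1}(u+a/u,a)=u^{n+1}+(a/u)^{n+1}$ and $e=v+a/v$, the nonzero roots $t$ of $g(t)=D_{n+1}(t+e,a)-D_{n+1}(e,a)$ are found by solving $D_{n+1}(t+e,a)=D_{n+1}(e,a)$; since the right-hand side equals $v^{n+1}+(a/v)^{n+1}$, the solutions correspond to $t+e=\zeta_{n+1}^i v+\zeta_{n+1}^{-i}(a/v)$, $i=0,\ldots,n$. (Equivalently, one can start from the Bhargava--Zieve factorization of $\frac{D_{n+1}(x+e,a)-D_{n+1}(y+e,a)}{x-y}$ quoted above and specialize $y=0$.) For $p\ne 2$ this yields, after expressing $v,a/v$ through $e$ and $\sqrt{e^2-4a}$, the stated roots $b=-\frac12\left(e(\alpha_i-2)\pm\beta_i\sqrt{e^2-4a}\right)$, where the two signs arise from replacing $i$ by $-i$; this simultaneously proves the displayed root formula for Cases \emph{i)} and \emph{ii)}.

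Next I would identify the smallest field containing the roots and describe the Frobenius map $\phi:x\mapsto x^q$ on them. Two pieces of data govern everything: the degree $d$ of $\mathbb{F}_q(\zeta_{n+1})=\mathbb{F}_{q^d}$ over $\mathbb{F}_q$, i.e.\ $d=\mathrm{ord}_{n+1}(q)$, which controls $\zeta_{n+1}\mapsto\zeta_{n+1}^q$ and hence $\alpha_i\mapsto\alpha_{iq}$, $\beta_i\mapsto\beta_{iq}$; and the square class of $e^2-4a$, which decides whether $\sqrt{e^2-4a}$ lies in $\mathbb{F}_q$ (so $\phi$ fixes it) or in $\mathbb{F}_{q^2}\setminus\mathbb{F}_q$ (so $\phi$ negates it). Accordingly $\phi(b_i)=b_{iq}$ in the square case and $\phi(b_i)=b_{-iq}$ in the non-square case, while in the degenerate case $e^2=4a$ the polynomial $h_g(-x)$ becomes a perfect square and only the $n/2$ values $b_i=b_{-i}$ survive as distinct roots. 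Goodness of $g$ is then exactly transitivity of $\phi$ on the set of distinct roots.

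The core of the proof is to translate these transitivity statements into the order conditions in the statement. Since $n+1$ is prime, $(\mathbb{Z}/(n+1))^{*}$ is cyclic of order $n$ and $-1$ is its unique involution, lying in $\langle q\rangle$ precisely when $d$ is even; this is where the parity of $n/2$ enters. In the square case the orbit of $b_i$ has length $d$, so goodness means $d=n$. In the non-square case the relevant map is $i\mapsto -iq$, so the orbit length is $\mathrm{ord}_{n+1}(-q)$, and a short computation in the cyclic group shows $\mathrm{ord}(-q)=n$ exactly when either $n/2$ is even and $d=n$, or $n/2$ is odd and $d=n/2$. In the degenerate case $e^2=4a$ the roots are indexed by $(\mathbb{Z}/(n+1))^{*}/\{\pm 1\}$ and the orbit length is the order of $q$ in this quotient, giving the stated conditions. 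For $p=2$ one has $e^2-4a=e^2$ and the quadratic defining $v$ degenerates into the Artin--Schreier equation $\tau^2+\tau=a/e^2$; solving it with the half-trace operator produces the explicit expression for $z_i$ and shows that solvability with the correct Frobenius behaviour is governed by $T_{q^{n/2}}(\delta_1)=1$, the admissible orders being $n$ or $n/2$.

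The main obstacle is the bookkeeping in this final translation. The delicate points are: (a) accounting for the sign twist $i\mapsto -iq$ in the non-square case and computing $\mathrm{ord}(-q)$ in terms of $\mathrm{ord}(q)$ and the parity of $n/2$; (b) handling $e^2=4a$, where $h_g(-x)$ is a perfect square, so the orbit lives in $(\mathbb{Z}/(n+1))^{*}/\{\pm 1\}$ and the correspondence with CPPs must be read off carefully; and (c) in characteristic $2$, carrying out the additive analysis, deriving the half-trace formula for the roots, and proving that the single-orbit condition is equivalent to $T_{q^{n/2}}(\delta_1)=1$. The geometric input (the shape of the roots coming from the Dickson difference) is essentially routine once set up; the genuine work is the group-theoretic case distinction separating orders $n$ and $n/2$ according to the square class of $e^2-4a$ and the parity of $n/2$.
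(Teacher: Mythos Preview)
Your proposal is correct and follows essentially the same route as the paper's proof: both extract the roots of $h_g(-x)$ from the Bhargava--Zieve factorization (equivalently, from the parametrization $x=u+a/u$) and then analyze the Frobenius orbit on them. The only difference is cosmetic bookkeeping in the final step: the paper checks transitivity by testing whether the discriminant $\beta_i^{2}(e^{2}-4a)$ is a square in $\mathbb{F}_{q^{n/2}}$ (separating the behaviour of $\beta_i$ under $\phi^{n/2}$, which depends on $\mathrm{ord}_{n+1}(q)$, from the square class of $e^{2}-4a$ in $\mathbb{F}_{q^{n/2}}$, which depends on the parity of $n/2$), whereas you track the permutation $i\mapsto \pm iq$ on $(\mathbb{Z}/(n+1))^{*}$ directly and reduce to the condition $\mathrm{ord}_{n+1}(-q)=n$; the two computations are equivalent, and for $p=2$ both the paper and your outline defer to the routine Artin--Schreier/half-trace analysis.
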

\begin{proof}
By \cite[Theorem 7]{BZ1999} we have
$$
D_{n+1}(x+e,a)-D_{n+1}(y+e,a)=(x-y)\prod_{i=1}^{n/2}\left((x+e)^2-\alpha_i(x+e)(y+e)+(y+e)^2+\beta_i^2a\right),
$$
where $\alpha_i=\zeta_{n+1}^i+\zeta_{n+1}^{-i}$ and $\beta_i=\zeta_{n+1}^i-\zeta_{n+1}^{-i}$. Then
$$h_{g}(-x)=\frac{D_{n+1}(-x+e,a)-D_{n+1}(e,a)}{-x}=\prod_{i=1}^{n/2}\left((-x+e)^2-\alpha_ie(-x+e)+e^2+\beta_i^2a\right),$$
that is, since $\alpha_i^2=\beta_i^2+4$,
$$h_{g}(-x)=\prod_{i=1}^{n/2}\left(x^2+xe(\alpha_i-2)  +(\alpha_i-2)((\alpha_i+2)a-e^2)  \right).$$

Note that the values $e(\alpha_i-2)$ are pairwise distinct for $i=1,\ldots,n$; hence, the sets of roots of two distinct quadratic factors of $h_{g}(-x)$ are disjoint. 

Assume $p\ne2$.
Since $\alpha_i^2=\beta_i^2+4$, the roots of $h_{g}(-x)$ are 
$$
-\frac{1}{2}\cdot\left(e(\alpha_i-2)\pm\sqrt{\beta_i^2(e^2-4a)}\right).$$

Since $(\beta_i^2(e^2-4a))^{q^j}=\left(\beta_{iq^j \pmod {n+1}}\right)^2(e^2-4a)$, if the roots of $h_{g}(-x)$ form a unique orbit under the Frobenius map then the order  $ord_{n+1}(q)$ of $q$ in $\mathbb{Z}_{n+1}^*$ must be either $n$ or $n/2$. 

Thus, we check when $\beta_i^2(e^2-4a)$ is a non-square in $\mathbb F_{q^{n/2}}$, so that the $(n/2)$-th power of the Frobenius map permutes the roots of $h_{g}(-x)$.
Note that if $ord_{n+1}(q)=n/2$, then $\beta_i^{q^{n/2}}=\beta_i$ and therefore $\beta_i^2$ is a square in $\mathbb{F}_{q^{n/2}}$; if on the contrary  $ord_{n+1}(q)=n$, then $\beta_i^{q^{n/2}}=-\beta_i$ and $\beta_i^2$ is a non-square in $\mathbb{F}_{q^{n/2}}$. 
Also, $n/2$ even implies that $(e^2-4a)$ is always a square in $\mathbb{F}_{q^{n/2}}$, whereas if $n/2$ is odd then $(e^2-4a)$ is a square in  $\mathbb{F}_{q^{n/2}}$ if and only if it is a square in $\mathbb F_q$.

If $e^2-4a=0$, then $h_g(-x)$ is a square and its roots form a unique orbit under Frobenius. This completes the proof for $p\ne2$.

For $p=2$, similar computations using the solutions of quadratic equations in characteristic $2$ provide the claim.
\end{proof}

\section{CPPs from exceptional polynomials of type B)}\label{section:B}

Throughout this section we assume that $n+1=p$.
For $p=2$, it is straightforward that there exist no exceptional polynomials of type B); hence, we assume that $p\ne2$.
We denote by $\mathbb{N}_{\mathbb{F}_q/\mathbb{F}_p}$ the norm map $\mathbb{F}_q\rightarrow\mathbb{F}_p$, $x\mapsto x^{1+p+p^2+\cdots+q/p}$.

\begin{theorem}\label{Caso_primo} 
Assume that $(n+1)^4<q$. The monomial $b^{-1}x^{\frac{q^n-1}{q-1}+1}$  is a CPP of $\mathbb F_{q^n}$ if and only if, for some divisor $r$ of $n$, one of the following cases occurs:
\begin{enumerate}[i)]
\item $b$ is an element of $\left\{-\zeta_r^i\alpha \ | \ i\in\{0,\ldots,r-1\}, \ \alpha^r=\zeta_{q-1}^j, \ \gcd(r,j)=1\right\}$, or
\item $b$ is an element of
$$\Big\{(v_0-\lambda u_0)^\frac{p-1}{r}-e \,\Big|\,  \lambda \in \mathbb F_p^*, \ e,u_0^{p-1} \in \mathbb{F}_q^*, \ u_0^{\frac{(p-1)(q-1)}{r}}\neq 1,$$
$$\hspace{1.8 cm} \ v_0^\frac{p-1}{r}=e,\ ord\left(\mathbb{N}_{\mathbb{F}_q/\mathbb{F}_p}\left(\frac{u_0^{p-1}}{e^{r}}\right)\right)=p-1\Big\}.$$
\end{enumerate}
\end{theorem}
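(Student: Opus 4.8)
The plan is to invoke Propositions~\ref{CPPexceptional} and~\ref{Indecomponibili}, which reduce Theorem~\ref{Caso_primo} to deciding exactly which indecomposable exceptional polynomials of type~B) are \emph{good}: for $g(x)=(x+e)((x+e)^{(p-1)/r}-a)^{r}-e(e^{(p-1)/r}-a)^{r}$ one must determine when the roots of $v_g(x)=g(-x)/(-x)$ form a single orbit under $x\mapsto x^{q}$. I would normalise by writing $g(x)=\phi(x+e)-\phi(e)$ with $\phi(u)=u(u^{s}-a)^{r}$ and $s=(p-1)/r$ (so the exponent $(p-1)/r$ of B) is this $s$, and the divisor called $r$ in the statement is $s$). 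Then $b$ is a nonzero root of $v_g$ precisely when $b=e-u$ for a root $u\neq e$ of $\phi(u)=\phi(e)$. The first computation is $\phi'(u)=-a(u^{s}-a)^{r-1}$, which rests on the cancellation $1+rs=p\equiv 0$: all critical points of $\phi$ lie on $u^{s}=a$ and are sent to the unique critical value $0$. Hence $v_g$ is separable iff $\phi(e)\neq 0$, i.e.\ $e\neq 0$ and $e^{s}\neq a$; and since the standing hypothesis $a^{r(q-1)/(p-1)}\neq 1$ is equivalent to $a\notin(\mathbb{F}_{q}^{*})^{s}$, separability holds automatically once $e\neq 0$. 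The dichotomy $e=0$ versus $e\neq 0$ is what produces Cases~i) and~ii), both with divisor $s$.

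For $e=0$ I would note $g(x)=x(x^{s}-a)^{r}$, so that $v_g(x)=((-1)^{s}x^{s}-a)^{r}$ is a perfect $r$-th power of a binomial. Its distinct roots satisfy $b^{s}=(-1)^{s}a$, each occurs with multiplicity $r$, and they form a single Frobenius orbit --- necessarily of length $s$, making $v_g$ the $r$-th power of the minimal polynomial of $b$ --- if and only if $(-1)^{s}x^{s}-a$ is irreducible over $\mathbb{F}_{q}$. Rewriting this irreducibility in multiplicative terms via $\alpha^{s}=\zeta_{q-1}^{j}$ with $\gcd(s,j)=1$ yields exactly the set described in Case~i).

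For $e\neq 0$, where $v_g$ is separable and a good $g$ must give an orbit of full length $n=p-1$, I would obtain the roots explicitly by passing to $w=u^{s}$: raising $\phi(u)=\phi(e)$ to the $s$-th power gives $\Psi(w)=\Psi(e^{s})$ with $\Psi(w)=w(w-a)^{p-1}$, a polynomial whose monodromy group is $\mathbb{F}_{p}$. Through the change of variable $z=a/(w-a)$ the equation $\Psi(w)=\Psi(e^{s})$ becomes an Artin--Schreier equation, so its $p$ solutions are parametrised by $\eta\in\mathbb{F}_{p}$; for each $w_\eta$ the compatible $s$-th root is forced, namely $u_\eta=e((e^{s}-a)/(w_\eta-a))^{r}$. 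A short manipulation, using that $(e^{s}-a)/(w_\eta-a)=1+\rho\eta$ for a fixed $\rho$ with $\rho^{p-1}=a/e^{s}$, rewrites this as $u_\eta=(v_0-\lambda u_0)^{r}$ with $\lambda=\eta$, $v_0^{r}=e$ and $u_0^{p-1}=a$; then $b=e-u_\eta$ is the formula of Case~ii), the forbidden value $\eta=0$ corresponding to the excluded root $b=0$.

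The heart of the argument, and the step I expect to be the main obstacle, is the orbit-length count in the case $e\neq 0$. Because $e\in\mathbb{F}_{q}$, the Frobenius orbit of $b=e-u_\eta$ coincides with that of $u_\eta=(v_0-\lambda u_0)^{r}$, and applying $x\mapsto x^{q}$ mixes two effects that must be disentangled: an additive shift of the Artin--Schreier parameter $\eta$, governed by a trace down to $\mathbb{F}_{p}$, and a multiplicative $r$-th-root-of-unity ambiguity introduced when extracting the $s$-th root. Combining them and showing that the $p-1$ roots constitute a single orbit exactly when $\mathrm{ord}(\mathbb{N}_{\mathbb{F}_{q}/\mathbb{F}_{p}}(u_0^{p-1}/e^{s}))=p-1$ --- here $u_0^{p-1}=a$ and $u_0^{p-1}/e^{s}=\rho^{p-1}$ --- is the delicate point, and it is where the norm to $\mathbb{F}_{p}$ and the maximal-order condition of Case~ii) emerge. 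Assembling the two cases over all divisors $s\mid n$ then gives Theorem~\ref{Caso_primo}.
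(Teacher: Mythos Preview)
Your case split $e=0$ versus $e\neq0$ and your explicit description of the roots both agree with the paper. The paper, however, does not pass through the auxiliary polynomial $\Psi(w)=w(w-a)^{p-1}$: it simply writes down the set $R=\{(v_0-\lambda u_0)^{k}-e:\lambda\in\mathbb{F}_p^{*}\}$ (with $k=(p-1)/r$, $v_0^{k}=e$, $u_0^{p-1}=a$), checks by direct substitution that these are roots of $h_g(-x)$, and verifies that they are pairwise distinct using $e^{r}\neq a$. Your derivation via $\Psi$ is a legitimate and more conceptual alternative that explains \emph{why} the roots take this shape.

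Where your plan needs correction is the Frobenius-orbit step. The equation $cz^{p}-z-1=0$ you reach is not a genuine Artin--Schreier equation: two of its solutions differ by an element $\delta$ with $\delta^{p-1}=1/c$, so the parameter set carries a \emph{multiplicative} $\mathbb{F}_p^{*}$-action, and Frobenius acts on your $\eta$ by multiplication, not by an additive trace-shift. There is also no root-of-unity ambiguity once the compatible $s$-th root has been forced, as you yourself noted. The paper's computation is entirely direct: from $u_0^{p-1}=a$ and $v_0^{k}=e$ one has $u_0^{q}=u_0\,a^{(q-1)/(p-1)}$ and $v_0^{q}=v_0\,e^{(q-1)/k}$, hence $\bigl((v_0-\lambda u_0)^{k}\bigr)^{q}=(v_0-\lambda N\, u_0)^{k}$ with $N=\mathbb{N}_{\mathbb{F}_q/\mathbb{F}_p}(a/e^{r})\in\mathbb{F}_p^{*}$ (after ruling out a spurious $k$-th root of unity, again via $e^{r}\neq a$). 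The orbit length is then exactly the multiplicative order of $N$, and the condition of Case~ii) follows at once. Since your own formula $u_\eta=(v_0-\lambda u_0)^{k}$ already puts you in position to run precisely this computation, replacing the Artin--Schreier heuristic by it closes the gap and your proof then coincides with the paper's.
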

\begin{proof}
Up to CPP-equivalence, the only indecomposable exceptional polynomials of degree $p$ over $\mathbb F_q$ are the polynomials
$$g(x)=(x+e)\left((x+e)^r-a\right)^{k},$$
where $r$ is a divisor of $n$ and $k=n/r$, 
with $a,e \in \mathbb{F}_q$, $a^{\frac{q-1}{r}}\neq 1$; see \cite[Theorem 8.4.14]{MP2013}.
Hence, 
$$
h_g(-x)=\frac{1}{-x}\Big((-x+e)\left((-x+e)^r-a\right)^{k}-e\left(e^r-a\right)^k\Big).
$$
We distinguish a number of cases.
\begin{itemize}
\item $a=0$. In this case the polynomial $g(x)=(x+e)^p$ is not good.
\item $e=0$ and $a\ne0$. We have that  $h_g(-x)=((-x)^r-a)^k$ has $r$ distinct roots with multiplicity $k$, namely $-\zeta_r^i\alpha$, where $\alpha^r=a$ and $i=0, \dots, k-1$. They form a single orbit under the Frobenius map if and only if $x^r-a$ is irreducible over $\mathbb F_q$. By \cite[Theorem 3.75]{LN}, this is equivalent to require that $a=\zeta_{q-1}^j$ with $\gcd(r,j)=1$.

\item $e\neq0$ and $a\ne0$.
Fix $u_0$, $v_0$ such that $u_0^{p-1}=a$ and $v_0^k=e$. 
It is straightforward to check that the set of roots of $h_g(-x)$ contains  
$$
R=\left\{(v_0-\lambda u_0)^k-e\mid \lambda \in \mathbb F_p^*\right\}.
$$
Note that $e^r\ne a$, since $a^{\frac{q-1}{r}}\neq 1= \left(e^r\right)^{\frac{q-1}{r}}$.
We show that $R$ actually consists of the $p-1$ distinct roots of $h_g(-x)$.
Assume on the contrary that  $(v_0-\lambda u_0)^k-e=(v_0-\lambda' u_0)^k-e$ for some $\lambda\neq \lambda'$. Then  $v_0-\lambda u_0=\mu(v_0-\lambda' u_0)$ for some $\mu$ with $\mu^k=1$, and hence
$
v_0(1-\mu)=u_0(\lambda-\mu \lambda').
$
Since $k$ divides $p-1$, both $\mu$ and $\mu-1$ lies in $\mathbb F_p$. As $\lambda \neq \lambda'$ we have $\mu\neq 1$ and hence
$
1=(v_0/u_0)^{p-1}=e^{\frac{p-1}{k}}/a=e^r/a,
$
a contradiction.

In the following we prove that the elements of $R$ are  in the same orbit under the Frobenius map if and only if 
$$ord\left(\mathbb{N}_{\mathbb{F}_q/\mathbb{F}_p}\left(\frac{a}{e^{r}}\right)\right) =p-1.$$

Let $i\in\{1,\ldots,p-1\}$ be the smallest positive integer such that $((v_0-\lambda u_0)^k-e)^{q^i}=(v_0-\lambda u_0)^k-e$,
so that the elements of $R$ are in the same orbit under the Frobenius map if and only if $i=p-1$.

Since $u_0^{q^i}=u_0a^{(q^i-1)/(p-1)}$ and $v_0^{q^i}=v_0e^{(q^i-1)/k}$, the condition $(v_0-\lambda u_0)^{kq^i}=(v_0-\lambda u_0)^k$ holds if and only if
$$ (v_0e^{(q^i-1)/k}-\lambda u_0a^{(q^i-1)/(p-1)})^{k}=(v_0-\lambda u_0)^k\,, $$
which is equivalent to
$$ \left(v_0-\lambda u_0\frac{a^{(q^i-1)/(p-1)}}{e^{(q^i-1)/k}}\right)^{k}=(v_0-\lambda u_0)^k\,, $$
that is,
$$ \left(v_0-\lambda u_0\frac{a^{(q^i-1)/(p-1)}}{e^{(q^i-1)/k}}\right)=\xi(v_0-\lambda u_0)\,, $$
where $\xi^k=1$.
Suppose $\xi \neq 1$, then 
$$v_0/u_0=\lambda\frac{\frac{a^{(q^i-1)/(p-1)}}{e^{(q^i-1)/k}}-\xi}{1-\xi}\in \mathbb{F}_p^*\,,$$
 and hence $(v_0/u_0)^{p-1}=1$; this implies $a=e^{(p-1)/k}=e^r$, impossible. This means $\xi=1$, that is
\begin{equation}\label{condizione}
\frac{a^{(q^i-1)/(p-1)}}{e^{(q^i-1)/k}}=1\,.
\end{equation}
Since
$$ \frac{q^i-1}{p-1}\equiv \frac{i(q-1)}{p-1}\pmod{q-1} \quad\textrm{ and }\quad \frac{q^i-1}{k}\equiv \frac{i(q-1)}{k}\pmod{q-1}\,,  $$
Equation \eqref{condizione} is equivalent to
$$ \frac{a^{i(q-1)/(p-1)}}{e^{i(q-1)/k}}=1\,, $$
that is,
$$ \left(\mathbb{N}_{\mathbb{F}_q/\mathbb{F}_p}\left(\frac{a}{e^{(p-1)/k}}\right)\right)^i=1\,. $$ 
Therefore, $i=p-1$ if and only if $ord\left(\mathbb{N}_{\mathbb{F}_q/\mathbb{F}_p}\left(\frac{a}{e^{(p-1)/k}}\right)\right)=p-1$. The thesis follows.
\end{itemize}
\end{proof}

\section{CPPs from exceptional polynomials of type C)}\label{section:C}

In this section we deal with one of the three classes of exceptional polynomials of type C), namely the third class in \cite[Theorem 8.4.12 with $e=1$]{MP2013}.

\begin{proposition}\label{typeC}
Let $p=3$,  $s=p^r>3$, $\gcd(r,2m)=1$. The exceptional polynomial
$$
f_e(x)=(x+e)((x+e)^{2}-a)^{(s+1)/4}\left( \frac{((x+e)^{2}-a)^{(s-1)/2}+a^{(s-1)/2}}{(x+e)^{2}}\right)^{(s+1)/2},
$$
where $a$ is a non-square in $\mathbb{F}_q^*$,
is not good over $\mathbb{F}_q$.
\end{proposition}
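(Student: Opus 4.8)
The plan is to reformulate goodness as a statement about a single Frobenius orbit on a fibre of $f$, and then to derive a contradiction from the fact that the arithmetic monodromy group of $f$ has no element of large enough order.

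First I would normalise. Writing $f(u)=u(u^2-a)^{(s+1)/4}M(u)^{(s+1)/2}$ with $M(u)=\frac{(u^2-a)^{(s-1)/2}+a^{(s-1)/2}}{u^2}$, so that $f_e(x)=f(x+e)$, the polynomial attached to $f_e$ by the definition of goodness is $g(x)=f_e(x)-f_e(0)=f(x+e)-f(e)$ (subtracting the constant is a CPP-equivalence). Then $v_g(x)=\frac{g(-x)}{-x}$ vanishes exactly at the points $e-u$ with $u\in f^{-1}(f(e))\setminus\{e\}$, and since $x\mapsto e-x$ is defined over $\mathbb F_q$, $f_e$ is good if and only if the $\tfrac{s(s-1)}{2}-1$ points of the fibre $f^{-1}(f(e))$ other than $e$ form a single orbit under $x\mapsto x^q$. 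Put $D=\deg f=n+1=s(s-1)/2$. Because $(n+1)^4<q$, the exceptional polynomial $f$ is a permutation polynomial of $\mathbb F_q$, so $e$ is the only $\mathbb F_q$-rational point of its own fibre; in particular the only $e\in\mathbb F_q$ lying over $0$ is $e=0$.

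Next I would locate the branch values. A logarithmic-derivative computation (using $s\equiv0$ and $s\pm1\equiv\pm1\pmod 3$) gives $\frac{f'}{f}=\frac{-a^{(s-1)/2}u}{(u^2-a)N(u)}$ with $N=u^2M$, whence $f'(u)=-a^{(s-1)/2}(u^2-a)^{(s-3)/4}M(u)^{(s-1)/2}$. Thus every critical point of $f$ is a zero of $u^2-a$ or of $M$, and at each of these $f$ itself vanishes, so $0$ is the only finite branch value of $f$. The case $e=0$ is then handled by hand: here the roots of $v_g$ are the negatives of the nonzero points of $f^{-1}(0)$, namely $\pm\sqrt a$ and the roots of $M$. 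Since $a$ is a nonsquare, $(\sqrt a)^q=a^{(q-1)/2}\sqrt a=-\sqrt a$, so $\{\sqrt a,-\sqrt a\}$ is already a full Frobenius orbit of size $2$; and $M(\pm\sqrt a)=a^{(s-3)/2}\neq0$ shows the (nonempty, as $\deg M=s-3\ge1$) set of roots of $M$ is disjoint from it. Hence there are at least two orbits and $f_0$ is not good.

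For $e\neq0$ we have $f(e)\neq0$, so $f(e)$ is not a branch value and the fibre $f^{-1}(f(e))$ consists of $D$ distinct points. The map $x\mapsto x^q$ permutes them as a single element $\rho$ of the arithmetic monodromy group $A$ of $f$, which acts faithfully on these $D$ points. By the classification of type C) polynomials in \cite[Section 8.4]{MP2013} the geometric monodromy group is $\mathrm{PSL}_2(s)$, so $A\le\operatorname{Aut}(\mathrm{PSL}_2(s))=\mathrm{P\Gamma L}_2(s)$. If $f_e$ were good, the $D-1$ points other than $e$ would form one $\rho$-orbit, forcing $\rho$ to contain a cycle of length $D-1$ and hence $\operatorname{ord}(\rho)\ge D-1$. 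But every element of $\mathrm{P\Gamma L}_2(s)$ has order at most $r(s+1)$: an element $g\phi^i$, with $\phi$ the Frobenius of $\mathbb F_s$, satisfies $(g\phi^i)^{r}\in\mathrm{PGL}_2(s)$, whose element orders divide $s$, $s-1$ or $s+1$. For $s=3^r$ with $r\ge3$ one checks $r(s+1)<\tfrac{s(s-1)}2-1=D-1$, since this reduces to $s>2r+2$. This contradiction shows $f_e$ is not good for any $e\neq0$ either, completing the proof. The routine parts are the derivative identity and the order estimate, while the step I expect to be the main obstacle is justifying that the arithmetic monodromy of $f$ lies in $\mathrm{P\Gamma L}_2(s)$, i.e. invoking the precise monodromy description of these type C) polynomials from \cite[Section 8.4]{MP2013}; everything downstream is the observation that a single orbit of size $\sim s^2/2$ would require an element of that order, which such a rank-one almost-simple group does not possess.
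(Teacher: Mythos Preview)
Your argument is essentially correct, but it takes a route quite different from the paper's. A few remarks:

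\textbf{Minor gaps.} The proposition does not assume $(n+1)^4<q$; you invoke this only to get that $f$ is a PP of $\mathbb F_q$ (so that $e=0$ is the unique preimage of $0$). In fact the hypothesis $\gcd(r,2m)=1$ is precisely the condition under which these type~C) polynomials are permutation polynomials of $\mathbb F_q$, so you can cite that instead. Also, the step ``$G=\mathrm{PSL}_2(s)$, so $A\le\mathrm{P\Gamma L}_2(s)$'' needs the observation that $C_A(G)=1$; this holds because $G$ acts primitively (point stabiliser dihedral of order $s+1$, maximal in $\mathrm{PSL}_2(s)$) and non-regularly on the $D$ points, so its centraliser in $S_D$ is trivial. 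With these two points filled in, your proof stands.

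\textbf{Comparison with the paper.} The paper does not use monodromy or element-order bounds at all. It quotes Zieve's explicit bivariate factorisation (\cite[Prop.~2]{Zieve1998}): every solution of $f_0(x)=f_0(y)$ has the form $x=\tau(y)$ for a M\"obius map $\tau$ with coefficients in $\mathbb F_{q^2}$. Since a root of $h_{f_e}(-x)$ corresponds to $x=e-\tau(e)$ with $e\in\mathbb F_q$, every root lies in $\mathbb F_{q^2}$; hence all Frobenius orbits have size at most $2$, which is far smaller than $n=s(s-1)/2-1$. This is shorter, uniform in $e$ (no separate $e=0$ case), and yields the sharper statement that the roots are actually $\mathbb F_{q^2}$-rational. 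Your approach, by contrast, is more structural and would transfer verbatim to any exceptional polynomial whose arithmetic monodromy group has no element of order $\ge D-1$; it trades an explicit factorisation for a group-theoretic order estimate in $\mathrm{P\Gamma L}_2(s)$.
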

\begin{proof} Following  \cite[Prop. 2]{Zieve1998},
consider $\tau(y)= (Ey+F)/(\overline{F}y +\overline{E})$, with $E, F,\overline{E},\overline{F} \in \mathbb{F}_{q^2}$ and $E\overline{E}-F\overline{F}=1$. The points $(x,y)$ of the curve with equation $\frac{f_0(x)-f_0(y)}{x-y}$  are exactly the points such that $x=\tau(y)$, where the choice of $(E, F)$ is unique up to replacing $(E,F)$ by $(-E,-F)$ and one of the following cases occurs:
\begin{itemize}
\item $F\overline{F}=-1/2$ ;
\item $\left(EF\overline{E}\overline{F}\right)^{(q-1)/2}=-1$ and $F\overline{F}\neq-1/2$ ;
\item $\left(EF\overline{E}\overline{F}\right)^{(q-1)/2}=1$ .
\end{itemize}

For $e\in \mathbb{F}_q$, a zero  of $h_{f_{e}}(-x)=\frac{f_{e}(-x)-f_e(0)}{-x}$ corresponds to the point $(-x+e,e)$ of the curve defined by $\frac{f_0(x)-f_0(y)}{x-y}=0$, that is $x=y-\tau(y)$ for some $\tau$ as described above.
Since $E, F, \overline{E},\overline{F}\in\mathbb{F}_{q^2}$, we have $\left(\tau(y)\right)^{q^2}=\tau(y)$ and $x^{q^2}=x$. Therefore, the roots of $h_{f_{e}}(-x)$ are not in a unique orbit under the Frobenius map. 
\end{proof}

\section{CPPs from exceptional polynomials of type D)}\label{section:D}

Throughout this section we assume that $n+1=p^r$ with $r>1$.
No complete classification of indecomposable exceptional polynomials of type D) is known. The following propositions deal 
with the cases related to linearized polynomials.

\begin{proposition}\label{linearizzati}
Let $j,k\geq1$ and $H(x)\in\mathbb F_q[x]$ such that $L(x)=x^jH(x^k)$ is a linearized polynomial of degree $n+1$. For $e\in\mathbb F_q$ we have that $S_e(x)=(x+e)^jH^k(x+e)-e^jH^k(e)$ is a good exceptional polynomial over $\mathbb F_q$ if and only if the elements $e-(e_0-\ell)^k$ belong to a unique orbit under the Frobenius map, where $e_0$ is a fixed $k$-th root of $e$ and $\ell$ ranges over the roots of $L(x)\setminus\{0\}$. 
\end{proposition}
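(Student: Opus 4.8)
The plan is to prove the statement by showing directly that the roots of $h_{S_e}(-x)$ are exactly the claimed elements $e-(e_0-\ell)^k$, after which the equivalence is just the definition of goodness. Write $g(x)=x^jH(x)^k$, so that $S_e(x)=g(x+e)-g(e)$; note that $S_e$ is exceptional as a translate of the type D) polynomial $g$ attached to the linearized $L$, that $S_e(0)=0$ by construction, and that we work under the standing nondegeneracy $S_e'(0)=g'(e)\neq0$. By definition $S_e$ is good precisely when the roots of
$$h_{S_e}(-x)=\frac{S_e(-x)}{-x}=\frac{g(e-x)-g(e)}{-x}$$
form a single orbit under Frobenius. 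These roots are the elements $x=e-w$, where $w$ runs over the solutions of $g(w)=g(e)$ with $w\neq e$; so the whole problem reduces to determining the set $\{w:g(w)=g(e)\}$.

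First I would record the identity linking $g$ to the linearized polynomial $L$: since $L(x)=x^jH(x^k)$, for every $s$ one has
$$g(s^k)=(s^k)^jH(s^k)^k=\bigl(s^jH(s^k)\bigr)^k=L(s)^k.$$
Fixing $e_0$ with $e_0^k=e$ gives $g(e)=L(e_0)^k$, and hence $g(s^k)=g(e)$ if and only if $L(s)^k=L(e_0)^k$. I would then exploit two structural features of $L$: additivity, which yields $L(s)-L(e_0)=L(s-e_0)$, so that the solutions of $L(s)=L(e_0)$ are exactly $s=e_0-\ell$ with $\ell$ a root of $L$; and the fact that the additive group $\ker L$ of roots of $L$ is stable under multiplication by every $k$-th root of unity, because $L(\zeta s)=\zeta^jL(s)$ whenever $\zeta^k=1$ (as $H((\zeta s)^k)=H(s^k)$). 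The latter is what makes the $k$-th power map $w\mapsto s$ with $s^k=w$ compatible with the additive coset structure.

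With these in hand, the easy inclusion is that every $(e_0-\ell)^k$ solves $g(w)=g(e)$, since $g((e_0-\ell)^k)=L(e_0-\ell)^k=(L(e_0)-L(\ell))^k=L(e_0)^k=g(e)$. For the reverse inclusion and the multiplicities I would assemble the factorization
$$g(s^k)-g(e)=L(s)^k-L(e_0)^k=\prod_{\zeta^k=1}\bigl(L(s)-\zeta L(e_0)\bigr)=\prod_{\zeta^k=1}L\bigl(s-e_0^{(\zeta)}\bigr),$$
where $L(e_0^{(\zeta)})=\zeta L(e_0)$, and then read off that, after substituting $w=s^k$, the stability of $\ker L$ under $k$-th roots of unity collapses the $k$ additive cosets onto one another, so that $g(w)-g(e)$ equals a unit times $\prod_{\ell\in\ker L}\bigl(w-(e_0-\ell)^k\bigr)$. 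A degree count ($\deg(g(w)-g(e))=p^r=|\ker L|$ in the separable case, equivalently $j=1$ and $\gcd(j,k)=1$) then shows these account for all roots with the correct multiplicities. Finally, $w=e$ corresponds exactly to $\ell=0$, and is a simple root precisely because $g'(e)\neq0$, so dividing by $x$ removes only $\ell=0$; the roots of $h_{S_e}(-x)$ are therefore $\{e-(e_0-\ell)^k:\ell\in\ker L\setminus\{0\}\}$, and $S_e$ is good iff these form a single orbit.

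The main obstacle is exactly this exactness step: proving that the parametrization $\ell\mapsto e-(e_0-\ell)^k$ is onto the root set with the right multiplicities, rather than merely producing some roots. The danger is that, a priori, solutions $s$ of $L(s)^k=L(e_0)^k$ lying in a coset $L(s)=\zeta L(e_0)$ with $\zeta\neq1$ might yield a $k$-th power $s^k$ not of the form $(e_0-\ell)^k$; ruling this out is precisely where the identity $L(\zeta s)=\zeta^jL(s)$ and the resulting stability of $\ker L$ under $k$-th roots of unity (together with $\gcd(j,k)=1$ in the separable setting) are indispensable. Once the coset alignment is secured the count is routine, but without it the claimed description of the roots would fail.
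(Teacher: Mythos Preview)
Your argument is correct and rests on the same key identity as the paper, $S_0(s^k)=L(s)^k$, together with additivity of $L$ and the scaling $L(\zeta s)=\zeta^{j}L(s)$ for $\zeta^k=1$. The only difference is packaging: the paper factors the bivariate $S_0(x^k)-S_0(y^k)$ following \cite{Cohen1990} and then specializes to the points $(-x+e,e)$ on the curve $S_0(x)-S_0(y)=0$, whereas you work directly with the univariate fibre $g(w)=g(e)$. Your route is slightly more elementary and is more explicit than the paper about why the parametrization $\ell\mapsto(e_0-\ell)^k$ is onto and about the hypothesis $\gcd(j,k)=1$; the paper uses this same coprimality tacitly in the step $\prod_i\bigl(L(x)-\zeta_k^{i}L(y)\bigr)=\prod_i L(x-\zeta_k^{i}y)$ but does not state it.
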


\begin{proof}
Following \cite[Theorem 2.1]{Cohen1990} we give the factorization of the curve defined by $S_0(x^k)-S_0(y^k)=0$. Let $N:=\deg(H)=\frac{(n+1)-j}{k}$ and  write
$$H(t)=\prod_{h=1}^{N}(t-\gamma_h),$$
where $\gamma_h \in \overline{\mathbb{F}}_q$. Then the roots of $H(t)$  and $L(x)=x^jH(x^k)$ are  
$\mathcal{H}=\{\gamma_h : h=1,\ldots,N\}$
and
$\mathcal{L}=\{\zeta_k^i \gamma_h : i=0,\ldots,k-1, \ h=1,\ldots,N\}\cup \{0\},$  
respectively. 

Since
$S_0(x^k)=(L(x))^k$, we have
$$S_0(x^k)-S_0(y^k)=\left(L(x)\right)^k-\left(L(y)\right)^k=\prod _{i=0}^{k-1}\left(L(x)-\zeta_k^i L(y)\right)=\prod _{i=0}^{k-1}L(x-\zeta_k^i y)$$
$$=\left(x^k- y^k\right)^j\prod _{\alpha=0}^{d-1}\prod _{\beta=0}^{d-1}\prod _{h=1}^{N}\left(y-\zeta_k^\alpha x -\zeta_k^{\beta} \gamma_h\right).$$

Consider the curve $\mathcal{C}_S$ defined by $S_0(x)-S_0(y)=0$. Clearly, the points $(x,y)$ of $\mathcal{C}_S$ satisfy $\overline{y}=\zeta_k^{\alpha}\overline{x}+\zeta_k^{\beta}\gamma_h$, where $h\in\{1,\ldots,N\}$, $\alpha,\beta\in\{0,\ldots,k-1\}$, $\overline{x}^k=x$, $\overline{y}^k=y$. 

Now consider the polynomial
$h_{S_e}(-x)=\frac{S_e(-x)-S_e(0)}{-x}$. The zeros of $h_{S_e}(-x)$ correspond to the points $(-x+e,e)$ of $\mathcal{C}_S$, $x\neq 0$. Fix $e_0$ such that $e_0^k=e$; then the zeros of $h(-x)$ are 
$\left \{e-(e_0 -\ell)^k\ | \ \ell \in \mathcal{L}\setminus \{0\} \right\}$.
\end{proof}

In general, it is not easy to establish when the elements $e-(e_0-\ell)^k$ belong to the same orbit under the Frobenius map. The following propositions provide two families of good exceptional polynomials arising from linearized polynomials.

\begin{proposition}\label{lin2}
Let $q=p^m$ and $L(x)=x^{p^r}-\zeta_{q-1} x \in\mathbb F_q[x]$.
If $r$ divides $m$, then $L(x)$ is good exceptional over $\mathbb F_q$.
\end{proposition}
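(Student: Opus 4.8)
The plan is to reduce the statement to the irreducibility of a single binomial and then dispatch it with the classical binomial-irreducibility criterion (\cite[Theorem 3.75]{LN}), exactly as was done in Section \ref{typeA2}. First I would factor $L(x)=x\bigl(x^{p^r-1}-\zeta_{q-1}\bigr)$, so that $L$ is linearized of degree $p^r=n+1$ with $L(0)=0$ and $L'(0)=-\zeta_{q-1}\neq0$. Since $(-x)^{p^r}=-x^{p^r}$ in characteristic $p$, a direct computation gives
$$\frac{L(-x)}{-x}=x^{p^r-1}-\zeta_{q-1}.$$
(Equivalently, this is the instance $j=k=1$, $H(x)=x^{p^r-1}-\zeta_{q-1}$, $e=0$ of Proposition \ref{linearizzati}, for which $S_0=L$ and the elements $e-(e_0-\ell)^k=\ell$ are exactly the nonzero roots of $L$.) Hence $L$ is good if and only if the roots of $x^{p^r-1}-\zeta_{q-1}$ lie in a single Frobenius orbit.

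Next I would record that this binomial is separable: its derivative is $-x^{p^r-2}$, which shares no root with it because $\zeta_{q-1}\neq0$. A squarefree polynomial has all its roots in one Frobenius orbit precisely when it is irreducible over $\mathbb F_q$, so the goodness of $L$ is now equivalent to the irreducibility of $x^{p^r-1}-\zeta_{q-1}$. To prepare the criterion I would note that $\zeta_{q-1}$ has order exactly $q-1$ in $\mathbb F_q^*$, so the relevant data are the order $e=q-1$ and the index $(q-1)/e=1$.

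The arithmetic heart of the argument is the hypothesis $r\mid m$, which yields $p^r-1\mid p^m-1=q-1$. Consequently every prime divisor of $t:=p^r-1$ divides $q-1$ and none divides $(q-1)/e=1$; moreover, if $4\mid t$ then $4\mid q-1$ since $t\mid q-1$. Both conditions of the binomial criterion are thus met, $x^{p^r-1}-\zeta_{q-1}$ is irreducible, and $L$ is good. Exceptionality then comes essentially for free: as $L$ is additive we have $\frac{L(x)-L(y)}{x-y}=(x-y)^{p^r-1}-\zeta_{q-1}$, whose absolutely irreducible factors $x-y-\ell$ (with $\ell$ a nonzero root of $L$) are defined over $\mathbb F_q$ only if $\ell\in\mathbb F_q$; but $p^r-1\mid q-1$ with $p^r-1>1$ (as $r>1$) forces $\zeta_{q-1}$ not to be a $(p^r-1)$-th power in $\mathbb F_q$, so $L$ has no nonzero root in $\mathbb F_q$ and hence no factor over $\mathbb F_q$.

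I expect the only genuinely delicate point to be the bookkeeping inside the binomial criterion, namely confirming that $\zeta_{q-1}$ has full multiplicative order, handling the mod-$4$ clause, and making explicit that ``single Frobenius orbit'' is equivalent to ``irreducible'' for this separable binomial. Once the divisibility $p^r-1\mid q-1$ supplied by $r\mid m$ is in hand, everything else is routine.
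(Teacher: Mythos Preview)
Your argument is correct and takes a genuinely different route from the paper's. Both approaches identify $h_L(-x)=x^{p^r-1}-\zeta_{q-1}$ and reduce the claim to its irreducibility over $\mathbb F_q$; you then invoke \cite[Theorem~3.75]{LN} directly (a tool the paper already uses elsewhere), observing that $r\mid m$ gives $p^r-1\mid q-1$, so with $a=\zeta_{q-1}$ of order $e=q-1$ all hypotheses of the binomial criterion, including the mod-$4$ clause, are automatic. The paper instead argues by hand: it writes down an explicit root $\eta=\omega^{(q^{N}-1)/(N(q-1))}$ with $N=p^r-1$ and $\omega$ primitive in $\mathbb F_{q^{N}}$, and checks directly that $\eta$ lies in no proper subfield $\mathbb F_{q^k}$, using that $q\equiv1\pmod N$ forces $N\mid\frac{q^k-1}{q-1}\Rightarrow N\mid k$. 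Your route is shorter and leans on an already-cited reference; the paper's route is self-contained and makes the field generated by a root explicit, in effect re-deriving the binomial criterion in this special case. You also make exceptionality explicit via the factorization of $(L(x)-L(y))/(x-y)$, which the paper leaves implicit; and your remark that $p^r-1>1$ is needed matches the section's standing assumption $r>1$.
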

\begin{proof}
Let $N=p^r-1$, and let $\eta\in\mathbb F_{q^{N}}$ be a root of $h_L (-x)=x^{N}-\zeta_{q-1}$. Then the roots of 
$h_L (-x)$ are $\{\lambda\eta \mid \lambda\in\mathbb F_{p^r}^*\}$.
The hypothesis $r \mid m$ is equivalent to require that $N$ divides $(q-1)$, and this implies that $N(q-1)\mid q^N-1$. Hence we can choose $\eta=\omega^{\frac{q^N-1}{N(q-1)}}$, where $\omega$ is a primitive element of $\mathbb F_{q^N}$.
The thesis is proved by showing that $\eta$ is not an element of any proper subfield of $\mathbb F_{q^{N}}$.
Suppose that $\eta\in\mathbb F_{q^{k}}$ with $k\mid N$. Then $\omega^{\frac{q^N-1}{N(q-1)}(q^k-1)}=1$, that is $N\mid\frac{q^k-1}{q-1}$; since $q\equiv1\pmod N$, this is equivalent to $N\mid k$, and hence to $N=k$.
\end{proof}

\begin{proposition}\label{lin3}
If $\gcd(m,p^r-1)$ is a divisor of $r$, then there exists a linearized polynomial $L(x)\in\mathbb F_q[x]$ of degree $p^r$ which is good exceptional over $\mathbb F_q$.
\end{proposition}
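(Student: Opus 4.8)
The plan is to reduce the statement to the construction of a single Frobenius orbit inside a suitable $\mathbb F_p$-vector space, where $\phi\colon x\mapsto x^q$ denotes the Frobenius. Observe first that a separable $p$-linearized polynomial $L\in\mathbb F_q[x]$ of degree $p^r$ with $L(0)=0$ is completely determined by its root set $V=\ker L$, which is a $\phi$-stable $\mathbb F_p$-subspace of $\overline{\mathbb F}_q$ of dimension $r$; conversely every such $V$ arises from $L=\prod_{w\in V}(x-w)$, and this $L$ lies in $\mathbb F_q[x]$ precisely because $V$ is $\phi$-stable. Since $0\in V$ one has $L'(0)=\prod_{w\in V\setminus\{0\}}(-w)\ne 0$, and the roots of $L(-x)/(-x)$ are exactly the elements of $V\setminus\{0\}$. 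Hence $L$ is good as soon as $\phi$ acts transitively on $V\setminus\{0\}$; in that case each nonzero $w\in V$ has degree $p^r-1$ over $\mathbb F_q$, so $V\cap\mathbb F_{q^s}=\{0\}$ whenever $(p^r-1)\nmid s$, and $L$ permutes $\mathbb F_{q^s}$ for all such $s$, hence is exceptional. Thus it suffices to exhibit a $\phi$-stable $\mathbb F_p$-subspace $V\subset\overline{\mathbb F}_q$ of dimension $r$ on which $\phi$ permutes the nonzero vectors in a single orbit.

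To build $V$ I would begin with a primitive polynomial $h(y)\in\mathbb F_p[y]$ of degree $r$; such a polynomial exists, and any root $\gamma$ of $h$ generates $\mathbb F_{p^r}^*$ and so has multiplicative order $p^r-1$. Consider the $q$-associate $H(x)=\sum_{i=0}^r h_i x^{q^i}$, the additive polynomial realising the operator $h(\phi)$ on $\overline{\mathbb F}_q$. Its kernel $W$ is nonzero because $H$ is separable of positive degree. Regarding $W$ as a module over $\mathbb F_p[y]$ with $y$ acting as $\phi$, it is annihilated by the irreducible polynomial $h$, so it is a direct sum of copies of the field $\mathbb F_p[y]/(h)\cong\mathbb F_{p^r}$.

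I would then choose any $v\in W\setminus\{0\}$ and let $V$ be the $\mathbb F_p[y]$-submodule it generates, namely $V=\langle v,\phi v,\dots,\phi^{r-1}v\rangle_{\mathbb F_p}$. Since $h$ is irreducible and $v\ne 0$, the annihilator of $v$ is $(h)$, so $\dim_{\mathbb F_p}V=r$, the minimal polynomial of $\phi|_V$ equals $h$, and $V$ is $\phi$-stable. Identifying $(V,\phi|_V)$ with $(\mathbb F_{p^r},\,z\mapsto\gamma z)$, the $\phi$-orbit of any nonzero vector is $\{\gamma^i v\}=\mathbb F_{p^r}^*\,v=V\setminus\{0\}$, so $\phi$ is transitive on $V\setminus\{0\}$. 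By the first paragraph, $L=\prod_{w\in V}(x-w)$ is then a good exceptional linearized polynomial of degree $p^r$, as required.

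The crux is the transitivity step together with the descent to $\mathbb F_q$: the decisive point is that primitivity of $h$ forces the single cyclic submodule $\mathbb F_p[y]\,v$ to carry one full orbit of length $p^r-1$ rather than splitting into several shorter orbits, while $\phi$-stability of $V$ is what guarantees $L\in\mathbb F_q[x]$ of the correct degree $p^r$. I would finally match this with the role of the hypothesis $\gcd(m,p^r-1)\mid r$ and with Proposition \ref{lin2}: when $m=1$ one simply takes $L=H$, whose degree is already $q^r=p^r$, whereas for general $m$ the hypothesis is the natural condition under which $v$, and hence $L$, can be pinned down explicitly inside a prescribed extension of $\mathbb F_q$. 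I would then check carefully whether this condition is essential for existence or only needed for such an explicit description, reconciling the construction above with the authors' intended one.
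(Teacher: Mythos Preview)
Your argument is correct and in fact proves more than the proposition states: the hypothesis $\gcd(m,p^r-1)\mid r$ is never invoked in paragraphs one through three, and your construction produces a good exceptional linearized $L$ of degree $p^r$ over $\mathbb F_q$ for \emph{every} $m$ and $r$. (Your closing paragraph hedges about this, but the body of the argument is complete as it stands.)

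The paper takes a different route and uses the hypothesis essentially. Setting $\epsilon=\gcd(m,p^r-1)$, the authors pick a primitive polynomial $\ell$ of degree $r/\epsilon$ over the intermediate field $\mathbb F_{p^\epsilon}\subseteq\mathbb F_q$ (this already requires $\epsilon\mid r$), let $L$ be its linearized $p^\epsilon$-associate, invoke \cite[Theorem~3.63]{LN} to get $L(x)/x$ irreducible over $\mathbb F_{p^\epsilon}$, and then use $\gcd(m/\epsilon,p^r-1)=1$ and a compositum argument to lift irreducibility to $\mathbb F_q$. Thus the paper exhibits $L$ explicitly as a $p^\epsilon$-associate with coefficients in the small field $\mathbb F_{p^\epsilon}$, at the price of the divisibility condition. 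Your module-theoretic approach instead passes through the much larger $q$-associate $H$ of degree $q^r$ and carves out a cyclic $\mathbb F_p[\phi]$-submodule of $\ker H$ of $\mathbb F_p$-dimension $r$; this is less explicit (it depends on an unspecified choice of $v\in W$) but shows that the hypothesis is not needed for existence. So your uncertainty in the last paragraph is resolved: the condition $\gcd(m,p^r-1)\mid r$ is what makes the authors' explicit construction go through, while your argument dispenses with it entirely.
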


\begin{proof}
Let $\epsilon=\gcd(m,p^r-1)$ and $\ell(x)\in\mathbb F_q[x]$ be a primitive polynomial of degree $r/\epsilon$ over $\mathbb F_{p^\epsilon}$, so that $\ell(x)$ is irreducible over $\mathbb F_{p^\epsilon}$ and has order $p^r-1$. Let $L(x)\in\mathbb F_q[x]$ be the linearized $p^\epsilon$-associate of $\ell(x)$. Then, by \cite[Theorem 3.63]{LN}, the polynomial $L(x)/x$ is irreducible over $\mathbb F_{p^\epsilon}$. Let $\alpha$ be a non-zero root of $L(x)$. Then the field extension $\mathbb F_{p^\epsilon}(\alpha) : \mathbb F_{p^\epsilon}$ has degree $p^r-1$, while the extension $\mathbb F_q : \mathbb F_{p^\epsilon}$ has degree $m/\epsilon$. The field $\mathbb F_q(\alpha)$ is the compositum of $\mathbb F_q$ and $\mathbb F_{p^\epsilon}(\alpha)$; since $\gcd(m/\epsilon,p^r-1)=1$, we have that $[\mathbb F_q(\alpha):\mathbb F_q]=p^r-1$. Then $L(x)/x=h_L(-x)$ is irreducible over $\mathbb F_q$, and the thesis follows.
\end{proof}

\section{The cases $n+1=8$ and $n+1=9$}\label{section:89}

The aim of this section is to study the cases $n+1=8$ (with $p=2$) and $n+1=9$ (with $p=3$). Since  no complete classification of exceptional polynomials is known, we study the existence of good exceptional polynomials via algebraic curves associated to a PP. In fact,  a polynomial $f(x)$ is a PP of $\mathbb F_q$ if and only if the algebraic curve $\mathcal C_{f}$ of degree $n$ with equation
$$
\frac{f(x)-f(y)}{x-y}=0
$$
has no $\mathbb F_q$-rational points off the ideal line and the line $x=y$. For $q$ large enough with respect to the degree of $f(x)$, by the Hasse-Weil bound this is only possible when $\mathcal C_f$ splits into components not defined over $\mathbb F_q$; see for instance \cite{BGZ}.
On the other hand, if $\mathcal C_f$ has no absolutely irreducible component defined over $\mathbb F_q$, with the only possible exception of the line $x=y$, then $f(x)$ is a permutation polynomial over an infinite number of extensions of $\mathbb F_q$, that is $f(x)$ is exceptional over $\mathbb F_q$; see \cite{C1970} and \cite[Chapter 8.4]{MP2013}.

\subsection{$n+1=8$, $p=2$}

\begin{proposition}\label{$n=7$}
Let $q=2^{m}$, $n+1=8$. The polynomial $f(x)=x^8+\sum_{i=1}^{7} A_{i}x^{8-i}\in\mathbb F_q[x]$ is exceptional over $\mathbb F_q$ if and only if 
$A_1=A_2=A_3=A_5=0$
and the polynomial $g(x)=x^7+A_4x^3+A_6x+A_7$ has no roots in $\mathbb{F}_q^*$. Also $f(x)$ is good exceptional if and only if $g(x)$ is irreducible over $\mathbb{F}_q$.
\end{proposition}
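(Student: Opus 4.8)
The plan is to study the plane curve $\mathcal{C}_f$ with affine equation $F(x,y):=\frac{f(x)-f(y)}{x-y}=0$, a symmetric curve of degree $7$, and to use the criterion recalled at the beginning of this section: $f$ is exceptional over $\mathbb{F}_q$ precisely when $\mathcal{C}_f$ has no absolutely irreducible component defined over $\mathbb{F}_q$ (the diagonal $x=y$ having been removed by the division). Since $\deg F=7$ is odd and $F$ is symmetric, the whole content of the proposition is the identification of those coefficient vectors $(A_1,\dots,A_7)$ for which every absolutely irreducible factor of $F$ fails to be $\mathbb{F}_q$-rational. The heart of the argument is to show that this forces $A_1=A_2=A_3=A_5=0$, that is, that $f$ must be a linearized (additive) polynomial $x^8+A_4x^4+A_6x^2+A_7x$; once this reduction is in place the remaining statements follow quickly.

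First I would establish the reduction to the additive case, which I expect to be the main obstacle. In characteristic $2$ one has $f'(x)=A_1x^6+A_3x^4+A_5x^2+A_7$, a perfect square, so separability of $f$ is governed by $A_1,A_3,A_5,A_7$; I would treat the non-separable subcases (where $f$ becomes a square $\phi^2$ and the problem descends to the degree-$4$ polynomial $\phi$) separately from the generic separable one. In the separable case the plan is to analyze the possible factorization types of $F$ over $\overline{\mathbb{F}}_q$: the absolutely irreducible factors have degrees partitioning $7$, and the requirement that Frobenius move every factor off itself (so that no factor is $\mathbb{F}_q$-rational) severely constrains the configuration. Carrying out this analysis explicitly, by computing the relevant factors and resultants as functions of the $A_i$ and discarding every configuration that produces an $\mathbb{F}_q$-rational component unless the four coefficients vanish, is the delicate step; it is exactly here that the Guralnick--Zieve conjecture for $n=7$ is being verified, since it amounts to excluding all sporadic (non-additive) indecomposable exceptional polynomials of degree $8$.

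Once $f=x^8+A_4x^4+A_6x^2+A_7x$ is additive, the analysis becomes transparent. Since $f(x)-f(y)=f(x-y)$ and $h_f(x)=f(x)/x=g(x)$, we obtain the clean identity
\[
F(x,y)=\frac{f(x-y)}{x-y}=g(x-y),
\]
so over $\overline{\mathbb{F}}_q$ the curve splits into the linear factors $x-y-\rho$, with $\rho$ ranging over the roots of $g$. Each such factor is a line, hence absolutely irreducible, and it is defined over $\mathbb{F}_q$ exactly when $\rho\in\mathbb{F}_q$. The root $\rho=0$ corresponds to the excluded diagonal and occurs if and only if $A_7=g(0)=0$. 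Therefore $\mathcal{C}_f$ has no $\mathbb{F}_q$-rational absolutely irreducible component other than possibly $x=y$ if and only if $g$ has no root in $\mathbb{F}_q^*$, which by the criterion above is equivalent to $f$ being exceptional; this proves the first assertion.

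Finally, for the goodness statement I would use that, in characteristic $2$, $h_f(-x)=g(-x)=g(x)$, so the roots of $h_f(-x)$ are precisely the nonzero roots of $f$, i.e. the roots of $g$. When $A_7\neq0$ one has $f'(x)=A_7\neq0$, so $f$ is separable and $g$ has $7$ distinct nonzero roots; these form a single orbit under the Frobenius map $x\mapsto x^q$ exactly when $g$ is irreducible over $\mathbb{F}_q$. By the definition of good exceptional polynomial this is equivalent to $f$ being good, which is the last assertion. The normalization $g'(0)\neq0$ underlying goodness forces $A_7\neq0$, so the separability hypothesis is automatic in this part.
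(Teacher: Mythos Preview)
Your framework matches the paper's: analyze $\mathcal{C}_f$, show that the absence of $\mathbb{F}_q$-rational absolutely irreducible components forces $A_1=A_2=A_3=A_5=0$, and then handle the resulting additive case via $F(x,y)=g(x-y)$. Your treatment of the additive case and of goodness is essentially identical to the paper's (and arguably cleaner in notation).

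The gap is in the reduction step, which you flag as ``the delicate step'' but do not actually carry out; the proposed route via separability subcases and resultants is vaguer and less efficient than what the paper does. The paper's argument is short and structured: since Frobenius must permute the absolutely irreducible factors of $F$ with no fixed factor, and factors of distinct degrees cannot lie in the same orbit, the degree partition of $7$ can have no part of multiplicity one. This leaves only $1^7$ (seven lines) and $2^2+1^3$ (two conics and three lines). In either case there is a linear component $y=x+\alpha$ with $\alpha\notin\mathbb{F}_q$; substituting into $F$ and equating coefficients gives $A_1=0$, $A_2\alpha+A_3=0$, $A_2\alpha^3+A_5=0$, $A_3\alpha^2+A_5=0$, plus a degree-$7$ relation in $\alpha$. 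Since $\alpha\notin\mathbb{F}_q$, the first three linear relations force $A_2=A_3=A_5=0$, and then $F(x,y)=g(x-y)$ already splits into seven lines, eliminating the conic case. This is the missing concrete content; once you insert it, your proposal becomes a complete proof along the same lines as the paper. Your separability digression is unnecessary here and can be dropped.
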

\begin{proof}
 The equation of the curve $\mathcal{C}_f$ reads
\begin{align*}\label{Curva8}
& (x+y)^7+A_1(x^6+x^5y+x^4y^2+x^3y^3+x^2y^4+xy^5+y^6)+A_2(x^5+x^4y+x^3y^2+x^2y^3+xy^4+y^5)\\&+A_3(x^4+x^3y+x^2y^2+xy^3+y^4) 
 +A_4(x+y)^3+A_5(x^2+xy+y^2)+A_6(x+y)+A_7=0.
\end{align*}

Applying the Frobenius automorphism $t \mapsto t^q $ to the factors of $\mathcal{C}_f$ it is easy to conclude that if the curve  $\mathcal{C}_f$ does not have absolutely irreducible components defined over $\mathbb{F}_q$, then the curve contains either two conics and three lines or seven lines. The unique ideal point of $\mathcal{C}_f$ is $(1:1:0)$. A line $\ell$ that is a component of the curve $\mathcal{C}_f$ has equation $\ell : y=x+\alpha$ and 
$$
\left\{\begin{array}{l}
A_1=0\\
A_2\alpha+A_3=0\\
A_2\alpha^3+A_5=0\\
A_3\alpha^2+A_5=0\\
\alpha^7+A_2\alpha^5+A_3\alpha^4+A_4\alpha^3+A_5\alpha^2+A_6\alpha+A_7=0.
\end{array}\right.
$$
If the line $\ell$ is not defined over $\mathbb{F}_q$ then $\alpha\in \overline{\mathbb{F}}_q\setminus \mathbb{F}_q$; this yields $A_2=A_3=A_5=0$, and the last equality becomes $\alpha^7+A_4\alpha^3+A_6\alpha+A_7=0$. It is easily seen that if $A_2=A_3=A_5=0$ then the curve $\mathcal{C}_f$ contains the seven lines $y+x+\alpha_i=0$, $i=1,\ldots,7$, where $\alpha_i^7+A_4\alpha_i^3+A_6\alpha_i+A_7=0$, and therefore $\mathcal C_f$ cannot split in two conics and three lines.

Thus, the only open case occurs when  $\mathcal{C}_f$ splits in seven lines either not defined over $\mathbb{F}_q$ or equal to $x-y=0$.

Therefore $f(x)$ is exceptional if and only if $T^7+A_4T^3+A_6T+A_7$ has no roots in $\mathbb{F}_q$ and it is good exceptional if and only if $T^7+A_4T^3+A_6T+A_7$ is irreducible over $\mathbb{F}_q$ since all the roots must be in the same orbit. 
\end{proof}

\begin{corollary}
Let $q=2^{m}$, $n+1=8$ and suppose $3$ divides $m$. The polynomial $x^8+A_7x$ is the only good exceptional polynomial over $\mathbb F_q$.
\end{corollary}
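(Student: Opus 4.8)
The plan is to derive everything from Proposition \ref{$n=7$}. By that proposition, any good exceptional polynomial $f$ of degree $8$ has $A_1=A_2=A_3=A_5=0$, so it is automatically the $\mathbb{F}_2$-linearized polynomial $f(x)=x^8+A_4x^4+A_6x^2+A_7x$, and it is good precisely when $g(x)=f(x)/x=x^7+A_4x^3+A_6x+A_7$ is irreducible over $\mathbb{F}_q$. Hence the whole statement reduces to showing that, under $3\mid m$, irreducibility of $g$ forces $A_4=A_6=0$. The converse (that $x^8+A_7x$ really occurs) is the easy direction: here $3\mid m$ is equivalent to $7\mid q-1$, and the irreducible-binomial criterion \cite[Theorem 3.75]{LN} shows that $x^7+A_7$ is irreducible for suitable $A_7$.

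Next I would reformulate irreducibility in terms of the kernel. Since $f$ is additive, its root set is an $\mathbb{F}_2$-subspace $V=\ker f$ of dimension $3$, the roots of $g$ are exactly $V\setminus\{0\}$, and $f=\prod_{v\in V}(x-v)$ is the subspace polynomial of $V$. The Frobenius $\phi\colon x\mapsto x^q$ stabilizes $V$ and acts $\mathbb{F}_2$-linearly, so $\phi|_V\in GL_3(\mathbb{F}_2)$, and $g$ is irreducible exactly when $\phi|_V$ permutes the seven nonzero vectors in a single $7$-cycle, i.e.\ when $\phi|_V$ is a Singer cycle of order $7$. The hypothesis $3\mid m$ enters through $\mu_7\subseteq\mathbb{F}_q$: the three eigenvalues of $\phi|_V$ (a full $\mathbb{F}_2$-conjugate triple of primitive seventh roots of unity) then lie in $\mathbb{F}_q$, so $\phi$ is diagonalizable over $\mathbb{F}_q$ and one obtains the grading $\mathbb{F}_{q^7}=\bigoplus_{i\in\mathbb{Z}/7}E_i$, where $E_i=\{x:x^q=\zeta_7^i x\}$, with $E_iE_j\subseteq E_{i+j}$ and $E_0=\mathbb{F}_q$.

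The heart of the argument, and what I expect to be the main obstacle, is to locate $V$ inside this grading. If one can prove that $V$ lies in a single graded piece, i.e.\ $V=\eta\,\mathbb{F}_8$ with $\mathbb{F}_qV$ one-dimensional, the conclusion is immediate, since $\prod_{s\in\mathbb{F}_8}(x-\eta s)=x^8-\eta^7x$ gives $A_4=A_6=0$ and $A_7=\eta^7$, exactly the asserted shape $x^8+A_7x$. The difficulty is that a Frobenius-stable $\mathbb{F}_2$-subspace on which $\phi$ acts as a $7$-cycle may a priori have its eigenvalues spread over three distinct eigenspaces $E_{i_1},E_{i_2},E_{i_3}$, with $\{i_1,i_2,i_3\}$ either the quadratic residues $\{1,2,4\}$ or the non-residues $\{3,5,6\}$, so that $\mathbb{F}_qV$ is three-dimensional. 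Writing a generator as $\gamma=\delta_{1}+\delta_{2}+\delta_{4}$ with $\delta_i\in E_i$ and demanding that $\{0\}\cup\{\phi^{\,j}\gamma\}$ be additively closed, the closure relations hold automatically from $1+\zeta_7^{\,j}\in\mu_7$; consequently the nonexistence of this three-eigenspace configuration does \emph{not} follow from additive closure and must instead be decided by a finer analysis. Concretely, I would expand the identity $f(\gamma)=0$ along the three components $E_1,E_2,E_4$, read off a $3\times 3$ linear system for $(A_4,A_6,A_7)$ over $\mathbb{F}_q$, and try to show that the constraint $A_4=A_6=0$ forces the three eigenspace-components of $\gamma$ to collapse to the one-dimensional $\mathbb{F}_8$-line case. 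This exclusion of the three-eigenspace configuration is the step that carries all the weight and on which the statement genuinely hinges; it is exactly here that one must use $3\mid m$ in a way stronger than the mere equivalence $3\mid m\Leftrightarrow 7\mid q-1$, and I would scrutinize this step most carefully, since the automatic additive closure above suggests the configuration is not easily ruled out.
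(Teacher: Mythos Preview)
Your suspicion in the final paragraph is exactly right, and in fact it is fatal: the three-eigenspace configuration you describe \emph{does} occur, so the step you correctly flagged as ``carrying all the weight'' cannot be completed, and the corollary as stated is false. Concretely, take $q=2^m$ with $3\mid m$, fix $\eta\in\mathbb{F}_{q^7}$ with $\eta^{q}=\zeta_7\eta$ (so $\eta\in E_1$, $\eta^2\in E_2$, $\eta^4\in E_4$), and set $V=\{c\eta+c^2\eta^2+c^4\eta^4:c\in\mathbb{F}_8\}$. Then $V$ is a $\phi$-stable $\mathbb{F}_2$-subspace of size $8$ on which $\phi$ acts as a Singer $7$-cycle (via $c\mapsto \zeta_7 c$), yet $V$ is spread over $E_1\oplus E_2\oplus E_4$. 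Running the very linear system you propose, with $u:=\eta^7\in\mathbb{F}_q^*$, one finds $A_4=u^2$, $A_6=u^3+u$, $A_7=u^4+u^3+u^2+u$. For $q=8$ (so $u=\zeta_7=\omega$ a root of $x^3+x+1$) this gives $g(x)=x^7+\omega^2x^3+x+\omega^3$, which one checks directly has no root in $\mathbb{F}_8$ and hence (its roots all lying in $\mathbb{F}_{8^7}$) is irreducible, while $A_4=\omega^2\neq0$.

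The paper's own proof is a one-line appeal to ``cyclic extensions theory,'' meaning Kummer theory: since $\zeta_7\in\mathbb{F}_q$, every degree-$7$ extension of $\mathbb{F}_q$ has the form $\mathbb{F}_q(\sqrt[7]{a})$. That is true but does not imply what is claimed --- it says the extension is \emph{generated} by a seventh root, not that every degree-$7$ irreducible polynomial over $\mathbb{F}_q$ (even one arising from an additive polynomial) must be a binomial $T^7+A_7$. So the paper's argument has the same gap you identified; your analysis is more careful than the paper's and correctly isolates the point where the proof breaks.
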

\begin{proof}
Since $3$ divides $m$ we have that $\zeta_7 \in \mathbb{F}_q$. From Cyclic extensions theory,  $T^7+A_4T^3+A_6T+A_7$ is irreducible over $\mathbb{F}_q$ if and only if $A_4=A_6=0$. The thesis follows from Proposition  \ref{$n=7$}.
\end{proof}

\begin{remark}
The exceptional polynomials of Proposition {\rm \ref{$n=7$}} are linearized, and hence described in {\rm \cite[Prop. 8.4.15]{MP2013}}. Also, Proposition {\rm \ref{$n=7$}}  confirms the conjecture {\rm \cite[Remark 8.4.18]{MP2013}} for the special case $n+1=8$.
\end{remark}

\begin{corollary}\label{coro1}
Assume that $q=2^r>8^4$. The monomial $b^{-1}x^{\frac{q^7-1}{q-1}+1}$  is a CPP of $\mathbb F_{q^8}$ if and only if $b$ is, up to a scalar multiple in $\mathbb{F}_q^*$, a root of some $F(x)=x^7+\alpha x^3+\beta x+\gamma \in \mathbb{F}_q[x]$,  irreducible over $\mathbb{F}_q$. 
\end{corollary}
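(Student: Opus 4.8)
The plan is to obtain the corollary by assembling the general correspondence between CPPs and good exceptional polynomials with the explicit classification of degree-$8$ exceptional polynomials in characteristic $2$ furnished by Proposition \ref{$n=7$}. First I would verify that the hypotheses of the general machinery hold for $n=7$: the assumption $q=2^r>8^4$ gives $(n+1)^4<q$, and since $q$ is even the integer $q-1$ is odd, so that $\gcd(n+1,q-1)=\gcd(8,q-1)=1$ holds automatically. With these in place, Proposition \ref{CPPexceptional} together with the definition of good exceptional polynomial reduces the problem to the following: the monomial $b^{-1}x^{(q^7-1)/(q-1)+1}$ is a CPP if and only if $b$ is a root of $h_g(-x)=\frac{g(-x)}{-x}$ for some good exceptional polynomial $g$ of degree $8$ over $\mathbb F_q$ with $g(0)=0$ and $g'(0)\ne0$.

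Next I would feed this reduction into Proposition \ref{$n=7$}. Writing a monic degree-$8$ polynomial with vanishing constant term as $g(x)=x^8+\sum_{i=1}^7 A_i x^{8-i}$, that proposition characterises $g$ as good exceptional precisely by $A_1=A_2=A_3=A_5=0$ together with irreducibility over $\mathbb F_q$ of the degree-$7$ quotient $h_g(x)=g(x)/x=x^7+A_4x^3+A_6x+A_7$. The decisive simplification available in characteristic $2$ is that $-1=1$, whence $h_g(-x)=h_g(x)=x^7+A_4x^3+A_6x+A_7$; thus the roots $b$ attached to a good $g$ are exactly the roots of the irreducible polynomial $F(x)=x^7+\alpha x^3+\beta x+\gamma$ with $(\alpha,\beta,\gamma)=(A_4,A_6,A_7)$. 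I would also note that irreducibility of $F$ forces $\gamma=F(0)\ne0$, which is the condition $g'(0)=A_7\ne0$, so no extra hypothesis must be imposed. For the converse I would start from an arbitrary irreducible $F(x)=x^7+\alpha x^3+\beta x+\gamma$, set $g(x)=xF(x)=x^8+\alpha x^4+\beta x^2+\gamma x$, and read Proposition \ref{$n=7$} in the reverse direction to conclude that $g$ is good exceptional and that every root of $F=h_g(-x)$ produces a CPP. This settles both implications at the level of individual polynomials.

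It remains to account for the clause ``up to a scalar multiple in $\mathbb F_q^*$'', for which I would check that the shape $x^7+\alpha x^3+\beta x+\gamma$ is stable under scaling: for $c\in\mathbb F_q^*$ one computes
$$
c^7 F(c^{-1}x)=x^7+\alpha c^4 x^3+\beta c^6 x+\gamma c^7,
$$
which is again of the same shape, is again irreducible, and has $cb$ as a root whenever $b$ is a root of $F$. Hence the set of admissible $b$ is invariant under multiplication by $\mathbb F_q^*$, in agreement with the CPP-equivalence bookkeeping recorded in the introduction. I expect no genuine obstacle here: the only point requiring care is purely notational, namely keeping the degree-$8$ polynomial $g$ separate from its degree-$7$ quotient $h_g$ and invoking the identity $h_g(-x)=h_g(x)$ valid in characteristic $2$. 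Once this is set up, both directions of the corollary follow at once from Propositions \ref{CPPexceptional} and \ref{$n=7$}.
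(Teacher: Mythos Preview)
Your proposal is correct and follows exactly the route the paper intends: the paper gives no explicit proof of Corollary~\ref{coro1}, treating it as an immediate consequence of Proposition~\ref{$n=7$} combined with the general correspondence between CPPs and good exceptional polynomials established in Section~2 (Proposition~\ref{CPPexceptional} and the proposition following the definition of ``good''). Your write-up simply spells out these details, including the characteristic-$2$ simplification $h_g(-x)=h_g(x)$ and the scaling argument for the ``up to a scalar multiple'' clause, all of which are implicit in the paper's setup.
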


\subsection{$n+1=9$, $p=3$}

\begin{proposition}\label{exceptional}
Let $q=3^h$.
The polynomial 
$$F(x)=x^9+A_1x^8+A_2x^7+A_3x^6+A_4x^5+A_5x^4+A_6x^3+A_7x^2+A_8x$$
is exceptional over $\mathbb{F}_q$ if and only if one of the following cases occurs.

\begin{enumerate}[i)]
\item 
\begin{equation}\label{pol1} F(x)=x^9+A_3x^6+A_6x^3
\end{equation}
and $T^6+A_3T^3+A_6\in\mathbb F_q[T]$ has no roots in  $\mathbb{F}_q^*$;
\item 
\begin{equation}\label{pol2} F(x)=x^9+A_6x^3+A_8x
\end{equation}
and $T^8+A_6T^2+A_8\in\mathbb F_q[T]$ has no roots in  $\mathbb{F}_q^*$;

\item 
$$F(x)=x^9+A_3x^6+A_4x^5+A_5x^4+\left(A_3^2 + A_3 \frac{A_5^3}{A_4^3} + \frac{A_5^2}{A_4}\right)x^3$$
 \begin{equation}\label{pol3}\qquad\quad+\left(2A_3 A_4  +2 \frac{A_5^3}{A_4^2}\right)x^2+\left(2A_3A_5 + A_4^2  +2 \frac{A_5^4}{A_4^3}\right)x, \end{equation} 
where 
\begin{enumerate}
\item $A_4\neq 0$,
\item the polynomial $T^4+2A_3T+2A_4\in\mathbb F_q[T]$ has no roots in $\mathbb F_{q}$;
\end{enumerate}
\item 
$$F(x)=x^9+A_2x^7+A_3x^6+A_5x^4+\left(A_2^3 + \frac{A_3 A_5}{A_2}\right)x^3+$$
\begin{equation}\label{pol4}
\left(2A_2 A_5  +2\frac{A_3^3}{A_2}\right)x^2+\left(A_2^4+A_3A_5+\frac{A_5^2}{A_2}+\frac{A_3^4}{A_2^2}\right)x,
\end{equation}
where $2A_2$ is not a square in $\mathbb{F}_q$.
\end{enumerate}


\end{proposition}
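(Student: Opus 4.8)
The plan is to run the same curve-theoretic machinery used for Proposition~\ref{$n=7$}, now for the degree-$8$ curve attached to a degree-$9$ polynomial in characteristic $3$. By the criterion recalled at the start of this section, $F$ is exceptional over $\mathbb F_q$ if and only if the curve $\mathcal C_F$ with equation $\frac{F(x)-F(y)}{x-y}=0$ has no absolutely irreducible component defined over $\mathbb F_q$, with the only possible exception of the line $x=y$. First I would compute the behaviour at infinity: since $p=3$ we have $x^9-y^9=(x-y)^9$, so the degree-$8$ leading form of $\mathcal C_F$ equals $(x-y)^8$. Hence $\mathcal C_F$ meets the line at infinity only at $P_\infty=(1:1:0)$, and every absolutely irreducible component passes through $P_\infty$ with all branches tangent to $x=y$. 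This single-point-at-infinity feature is what makes the degree-$9$ analysis tractable despite $n+1=3^2$ being of type D), for which no classification is available.

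Next I would translate exceptionality into a finite combinatorial problem. The Frobenius map permutes the absolutely irreducible components of $\mathcal C_F$; components in a common orbit are conjugate, hence of equal degree, and their union is defined over $\mathbb F_q$. Exceptionality forces every orbit other than a possible occurrence of the line $x=y$ to have size at least two. Partitioning the total degree $8$ accordingly, and using both the $x\leftrightarrow y$ symmetry of $\mathcal C_F$ and the fact that all components share the point $P_\infty$, one obtains a short list of admissible splitting types. I expect these to be: eight lines $x-y=\rho_i$ with the $\rho_i$ conjugate and non-rational (the additive case); the line $x=y$ counted twice times a degree-$6$ factor that is the pull-back under $(x,y)\mapsto(x^3,y^3)$ of a conic (the composite case $F=H(x^3)$); and a small number of configurations built from two conjugate quartics or from conics, controlled by an irreducibility condition. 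These should correspond respectively to cases ii), i), and the pair iii), iv).

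For each admissible type I would then impose that $\frac{F(x)-F(y)}{x-y}$ genuinely carries the prescribed factorization and read off the resulting relations among $A_1,\dots,A_8$ by matching coefficients. In the additive case $F(x)=x^9+A_6x^3+A_8x$ one has $\frac{F(x)-F(y)}{x-y}=(x-y)^8+A_6(x-y)^2+A_8$, a union of lines $x-y=\rho$ with $\rho$ ranging over the roots of $T^8+A_6T^2+A_8$; rationality of a line is exactly a root in $\mathbb F_q^*$, giving case ii). In the composite case $F(x)=H(x^3)$ with $H(u)=u^3+A_3u^2+A_6u$ one gets $\frac{F(x)-F(y)}{x-y}=(x-y)^2\bigl(x^6+x^3y^3+y^6+A_3(x^3+y^3)+A_6\bigr)$, whose residual factor is rational-component-free precisely when $T^6+A_3T^3+A_6$ has no root in $\mathbb F_q^*$, giving case i). The two remaining families are obtained by forcing $\mathcal C_F$ to split into conjugate quartics, respectively into the conic-type configuration, and solving the corresponding coefficient system; this is what pins $A_6,A_7,A_8$ to the displayed rational expressions in the free parameters and produces the conditions that $T^4+2A_3T+2A_4$ be root-free over $\mathbb F_q$ (case iii)) and that $2A_2$ be a non-square (case iv)).

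The main obstacle I anticipate is completeness together with the explicit handling of cases iii) and iv). Ruling out all other splitting types --- in particular mixed ones and those in which the line $x=y$ occurs with an intermediate multiplicity --- requires a careful use of the tangent-cone constraint at $P_\infty$ and of the symmetry, since unlike the degree-$7$ situation the degree-$8$ curve admits many more a priori partitions. For the genuinely indecomposable families iii) and iv) there is no classification to fall back on, so the coefficient-matching must be carried out directly: one has to verify both that the stated rational expressions for the higher coefficients are forced and that the arithmetic conditions (the quartic being root-free, respectively $2A_2$ a non-square) are exactly equivalent to the absence of an $\mathbb F_q$-rational absolutely irreducible component. Establishing that these conditions are simultaneously necessary and sufficient, rather than merely sufficient, is the delicate point of the argument.
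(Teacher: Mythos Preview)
Your plan is essentially the paper's own approach: analyse the possible splittings of the degree-$8$ curve $\mathcal C_F$ using the single tangent direction at infinity and the $x\leftrightarrow y$ involution $\psi$, then match coefficients in each admissible configuration. Two corrections are worth flagging.

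First, you have the geometric origin of cases iii) and iv) reversed. In the paper, case iii) (the family with $A_4\neq 0$ and the quartic condition on $T^4+2A_3T+2A_4$) arises when $\mathcal C_F$ splits into \emph{four conics}, each of the form $(x-y)^2+\alpha_i(x+y)+\beta_i=0$ fixed by $\psi$; the $\alpha_i$ are precisely the roots of $T^4+2A_3T+2A_4$, which is why root-freeness over $\mathbb F_q$ is the right condition. Case iv) (with $A_2\neq 0$ and $2A_2$ a non-square) arises when $\mathcal C_F$ splits into \emph{two quartics}, each $\psi$-fixed but interchanged by Frobenius because their coefficients live in $\mathbb F_q(\sqrt{2A_2})$.

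Second, the completeness argument is more branched than your outline suggests. Beyond the three ``pure'' types you list, the paper must dispose of: lines $y=x+\alpha$ with $\alpha\neq 0$ when $A_3,A_5\neq 0$, where two such lines split off and one must show the residual $\mathbb F_q$-rational sextic cannot itself split into non-rational conics or cubics; the four-conic configurations in which $\psi$ fixes two and swaps two, or swaps all four in pairs; and the two-quartic configuration in which $\psi$ swaps the quartics. Each of these collapses back to one of \eqref{pol1}--\eqref{pol4} or to a contradiction, but the verification is where most of the work lies, and your proposal underestimates it.
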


\begin{proof}

The curve $\mathcal{C}_f$ associated to the polynomial $F(x)=x^9+\sum_{i=1}^8 A_{8-i}x^i$ is
$$\sum_{i=0}^{8} A_{8-i}\frac{x^{i+1}-y^{i+1}}{x-y}=0,$$
where $A_i\in \mathbb{F}_q$, $i=1,\ldots,8$, $A_0=1$, and $A_8\neq 0$. 
\begin{itemize}
\item Suppose that $\mathcal{C}_f$ contains a line $\ell$ with equation $\ell: y=x+\alpha$, where $\alpha=0$ or $\alpha\notin \mathbb{F}_q$. Then $\alpha $ satisfies:
$$
\left\{\begin{array}{l}
2A_1=0\\
2\alpha A_1+A_2=0\\
2 \alpha^2 A_1=0\\
\alpha^3A_1+2 \alpha^2 A_2 + A_4=0\\
2 \alpha^4 A_1+2\alpha^3 A_2 + 2 \alpha^2 A_3+\alpha A_4+A_5=0\\
\alpha^5 A_1 + \alpha^2 A_4=0\\
2\alpha^6 A_1 + \alpha^8 A_2 + \alpha^3 A_4 + \alpha^2 A_5 + 2A_7=0\\
\alpha^8 A_0+\alpha^7 A_1+ \alpha^6 A_2 + \alpha^5 A_3 + \alpha^4 A_4+\alpha^3 A_5 +\alpha^2 A_6 +\alpha A_7+ A_8=0
\end{array}\right..
$$
This implies $A_1=A_2=0$. We distinguish two cases.
\begin{itemize}
\item $\alpha=0$. Then $A_4=A_5=A_7=A_8=0$. The curve becomes
\[(x-y)^2((x-y)^6+A_3(x-y)^3+A_6)=0.\] 
We have to require that the polynomial $T^6+A_3T^3+A_6$ has no roots in $\mathbb{F}_q^*$. 
\item $\alpha\neq 0$. Then $A_4=0$. The conditions above read 
$$A_5\alpha^2 + 2A_7=0, \quad \alpha^8 + A_3\alpha^5 + A_5 \alpha^3 + A_6 \alpha^2 + A_7\alpha + A_8=0,\quad 2A_3\alpha^2 + A_5=0.$$
If $A_5=0$ then $A_3=A_7=0$ and   $\alpha^8 + A_6\alpha^2  + A_8 =0$ and the curve splits in $8$ lines. They are not defined over $\mathbb{F}_q$ or equal to $x-y=0$ if and only if the polynomial $T^8 + A_6T^2  + A_8 =0$ has no roots in $\mathbb{F}_q^*$.

If $A_3=0$ then $A_5=A_7=0$ and   $\alpha^8 + A_6\alpha^2  + A_8 =0$, as above.

Suppose now $A_3,A_5\neq 0$. Then $A_5=A_3\alpha^2$, $A_7 =A_5^2/A_3$, and $A_8 = 2A_5A_6/A_3 + 2A_5^4/A_3^4$. Since $\alpha^2=A_5/A_3$, we have that $A_5/A_3$ is not a square in $\mathbb{F}_q$, otherwise the lines $y=x+\xi_1$ and $y=x+\xi_2$, where $\xi_i^2=A_5/A_3$, are $\mathbb{F}_q$-rational lines and the polynomial $F(x)$ is not exceptional. Let $a_3,a_5\in\mathbb F_{q^2}$ be such that $a_3^2=A_3$ and $a_5^2=A_5$. In this case the curve splits in 
$$(a_3x  -a_3y + a_5)(a_3x  -a_3y - a_5)\cdot $$
$$ \cdot(a_3^6 (x-y)^6     + a_3^4 a_5^2 (x-y)^4 +  a_3^8 (x+y)^3 +  a_3^2 a_5^4 (x-y)^2   -a_3^6 a_5^2 (x+y)   + a_3^6 A_6 + a_5^6)=0.$$

Since the sextic is defined over $\mathbb{F}_q$, it must split either in three conics or in two cubics. In the first case it is easily seen that all of them must be fixed by $\psi$. 

If a conic of equation  $(x-y)^2+\alpha(x+y)+\beta=0$ is contained in the sextic then in particular $A_3^2 =\alpha^3$ from which we get $a_3^{32} a_5^{12}=0$, impossible.

Suppose now that the sextic splits in two cubics. 

If they are fixed by $\psi$ then they have equations
$$(x-y)^3+\alpha_1 x^2+\alpha_2 x y+\alpha_1 y^2+\beta_1 x+\beta_1 y+\gamma_1=0$$
and 
$$(x-y)^3+\alpha_3 x^2+\alpha_4 x y+\alpha_3 y^2+\beta_2 x+\beta_2 y+\gamma_2=0.$$
Then in particular $\alpha_4=\alpha_3=-\alpha_1$, $\alpha_2=\alpha_1$. If $\alpha_1=0$ then $a_3^2 = \gamma_1 + \gamma_2$ and $a_3^2 = -\gamma_1 - \gamma_2$, which imply $a_3=0$, impossible. If $\alpha_1\neq 0$ then $\beta_1=\beta_2$ and again $a_3^2 = \gamma_1 + \gamma_2$ and $a_3^2 = -\gamma_1 - \gamma_2$, which imply $a_3=0$, impossible.

If they are switched by $\psi$ then they have equations
$$(x-y)^3+\alpha_1 x^2+\alpha_2 x y+\alpha_3 y^2+\beta_1 x+\beta_2 y+\gamma_1=0$$
and 
$$\lambda((y-x)^3+\alpha_3 x^2+\alpha_2 x y+\alpha_1 y^2+\beta_2 x+\beta_1 y+\gamma_1)=0.$$
Then in particular $\lambda= -a_3^6$, $\alpha_3=\alpha_1$.
If  $\alpha_2=-\alpha_1$, then $a_3^2 \alpha_1^2 + a_3^2 \beta_1 -a_3^2\beta_2 -a_5^2=0$ and $\alpha_1=0$, which implies $a_3=0$, impossible.
If  $\alpha_2=\alpha_1$, then $\alpha_1(\beta_1+\beta_2)=0$. In both the cases $a_3=0$, impossible.

\end{itemize}

\item Suppose that $\mathcal{C}_f$ splits in four absolutely irreducible conics. There are three distinct possibilities, depending on the number of components fixed by $\psi$.
\begin{enumerate}
\item All the conics are fixed by $\psi$. In this case the four conics are defined by
\begin{equation}\label{theconics} \mathcal{C}_i : (x-y)^2+\alpha_i(x+y)+\beta_i=0, \end{equation}
 for $i=1,2,3,4$. This gives immediately $A_1=A_2=0$.

The condition $A_4= 0$ implies $A_5=A_7=0$ and $A_3A_8=0$, that is the polynomial is either of type \eqref{pol1} or \eqref{pol2}. 

Suppose  $A_4\neq 0$. Then, by direct computation, $A_6 = A_3^2 + A_3 A_5^3/A_4^3  + A_5^2/A_4$, $A_7 = 2A_3 A_4  +2 A_5^3/A_4^2$, $A_8 = 2A_3A_5 + A_4^2  +2 A_5^4/A_4^3$; also, the $\alpha_i$'s are roots of $\ell_1(x)=x^4+2A_3x +2 A_4$, and 
$\beta_i= \alpha_i^2+A_5/A_4\alpha_i.$ On the other hand if all these conditions are satisfied then the curve splits in the four conics defined in \eqref{theconics}. Finally, the four conics are not defined over $\mathbb{F}_q$ if and only if the polynomial $T^4+2A_3T +2 A_4$ has no roots in $\mathbb{F}_q$.

\item Two conics are fixed by $\psi$ and two are switched. We can assume 
$$\mathcal{C}_1 : (x-y)^2+\alpha_1 (x+y)+\beta_1=0, \quad \mathcal{C}_2 : (x-y)^2+\alpha_2 (x+y)+\beta_2=0,$$
$$\mathcal{C}_3 : (x-y)^2+\alpha_3 x+\alpha_4 y+\beta_3=0, \quad \mathcal{C}_4 : (x-y)^2+\alpha_4 x+\alpha_3 y+\beta_3=0.$$
By direct compuation, we get immediately $A_1=A_2=A_4=A_5=A_7=0$ and $A_3A_8=0$, and hence $F(x)$ is of type \eqref{pol1} or \eqref{pol2}. 
\item No conic is fixed by $\psi$. We can assume 
$$\mathcal{C}_1 : (x-y)^2+\alpha_1 x+\alpha_2 y+\beta_1=0, \quad \mathcal{C}_2 : (x-y)^2+\alpha_2 x+\alpha_1y+\beta_1=0,$$
$$\mathcal{C}_3 : (x-y)^2+\alpha_3 x+\alpha_4y+\beta_2=0, \quad \mathcal{C}_4 : (x-y)^2+\alpha_4 x+\alpha_3y+\beta_2=0.$$
Also in this case we get $A_1=A_2=A_4=A_5=A_7=0$ and $A_3A_8=0$, and hence $F(x)$ is of type \eqref{pol1} or \eqref{pol2}.
\end{enumerate}
\item Suppose that $\mathcal{C}_f$ splits in two absolutely irreducible quartics $\mathcal Q_1$ and $\mathcal Q_2$. The automorphism $\psi$ can either switch or fix the two components.

In the former case, $\mathcal Q_1$ and $\mathcal Q_2$ have the form
$$\mathcal{Q}_1 : (x-y)^4+\alpha_1x^3+\alpha_2x^2y+\alpha_3xy^2+\alpha_4y^3+\beta_1x^2+\beta_2xy+\beta_3y^2+\gamma_1x+\gamma_2y+\delta=0,$$
$$\mathcal{Q}_2 : (x-y)^4+\alpha_4x^3+\alpha_3x^2y+\alpha_2xy^2+\alpha_1y^3+\beta_3x^2+\beta_2xy+\beta_1y^2+\gamma_2x+\gamma_1y+\delta=0.$$
We get $A_1=A_2=A_3=A_4=A_5=A_7=0$; hence, we have case \eqref{pol2}.

In the latter case, $\mathcal Q_1$ and $\mathcal Q_2$ have the form
$$\mathcal{Q}_1 : (x-y)^4+\alpha_1x^3+\alpha_2x^2y+\alpha_2xy^2+\alpha_1y^3+\beta_1x^2+\beta_2xy+\beta_1y^2+\gamma_1(x+y)+\delta_1=0,$$
$$\mathcal{Q}_2 : (x-y)^4+\alpha_3x^3+\alpha_4x^2y+\alpha_4xy^2+\alpha_3y^3+\beta_3x^2+\beta_4xy+\beta_3y^2+\gamma_2(x+y)+\delta_2=0.$$ 
Since $A_1=0$, we obtain $A_2A_4=0$. 

\begin{itemize}
\item Suppose $A_2=0$ and $A_4\neq 0$. Then $A_6 = A_3^2 + A_3A_5^3/A_4^3  + A_5^2/A_4$, $A_7 = 2 A_3A_4 + 2A_5^3/A_4^2$, $A_8=  2A_3A_5 + A_4^2 +A_5^4/A_4^3$ and we have case \eqref{pol3}.

\item Suppose now $A_2\neq 0$ and $A_4= 0$. Then $A_6 = A_2^3+ A_3A_5/A_2$, $A_7 = 2A_2A_5 + 2A_3^3/A_2$, $A_8= A_2^4 + A_3A_5  + A_5^2/A_2 + A_3^4/A_2^2$. Also, $\alpha_1^2=2A_2$, $\alpha_2=\alpha_3=-\alpha_4=-\alpha_1$,  $\beta_1=-\beta_3=2A_3/\alpha_1$,  $\beta_2=-A_3/\alpha_1  -\alpha_1^2$, $\beta_4=A_3/\alpha_1  -\alpha_1^2$, $\gamma_1=A_3+\alpha_1^3$, $\gamma_2=A_3-\alpha_1^3$, $\delta_1 = A_3\alpha_1  + A_3^2/A_2 + 2A_5\alpha_1/A_2 + 2\alpha_1^6/A_2$, $\delta_2 = -A_3\alpha_1  + A_3^2/A_2 + A_5\alpha_1/A_2 + 2\alpha_1^6/A_2$. Note that  $\alpha_i,\beta_i,\gamma_i,\delta_i$ are not defined over $\mathbb{F}_q$ if and only if $2A_2$ is not a square in $\mathbb{F}_q$. The quartics $\mathcal Q_1$ and $\mathcal Q_2$ read
$$ (x-y)^4+\alpha_1x^3+2\alpha_1x^2y+2\alpha_1xy^2+\alpha_1y^3+2A_3/\alpha_1 x^2+2(A_3/\alpha_1  +\alpha_1^2)xy$$
$$+2A_3/\alpha_1y^2+(A_3+\alpha_1^3)(x+y)+A_3\alpha_1  + A_3^2/A_2 + 2A_5\alpha_1/A_2 + 2\alpha_1^6/A_2=0,$$
$$(x-y)^4+2\alpha_1x^3+\alpha_1x^2y+\alpha_1xy^2+2\alpha_1y^3+A_3/\alpha_1x^2+(A_3/\alpha_1  +2\alpha_1^2)xy$$
$$+A_3/\alpha_1y^2+(A_3+2\alpha_1^3)(x+y)+2A_3\alpha_1  + A_3^2/A_2 + A_5\alpha_1/A_2 + 2\alpha_1^6/A_2=0,$$ 
and  $\mathcal Q_1$ and $\mathcal Q_2$ are switched by the Frobenius map. 

\item Finally, $A_2=A_4= 0$ implies $A_5=A_7=0$ and $A_3A_8=0$. As above, this gives types \eqref{pol1} or \eqref{pol2}.
\end{itemize}
\end{itemize}
\end{proof}

\begin{remark}
By direct computation, the exceptional polynomials of Proposition {\rm \ref{exceptional}} are equivalent to exceptional polynomials described in {\rm \cite[Prop. 8.4.15]{MP2013}}.

In fact, if $F(x)$ satisfies Case $i)$ or $ii)$, then $F(x)$ is a linearized polynomial.

If $F(x)$ satisfies Case $iii)$, then $F(x)=L_1\circ S\circ L_2(x)$, where $L_1(x)$ and $L_2(x)$ are linear, and $S(x)\in\mathbb F_q[x]$ has the form $x(a_2x^4+a_1x+a_0)^2$.

If $F(x)$ satisfies Case $iv)$, then $F(x)=L_1\circ S\circ L_2(x)$, where $L_1(x)$ and $L_2(x)$ are linear, and $S(x)\in\mathbb F_q[x]$ has the form $S(x)=x(a_2x^4+a_1x+a_0)^2$ when $A_2^2 A_5 + A_3^3\ne0$, or $S(x)=x(a_2x^2+a_0)^4$ when $A_2^2 A_5 + A_3^3=0$.

This confirms the conjecture {\rm \cite[Remark 8.4.18]{MP2013}} for the special case $n+1=9$.
\end{remark}

\begin{proposition}\label{goodexceptional}
Let $q=3^h$.
The polynomial 
$$F(x)=x^9+A_1x^8+A_2x^7+A_3x^6+A_4x^5+A_5x^4+A_6x^3+A_7x^2+A_8x$$
is good exceptional over $\mathbb{F}_q$ if and only if one of the following cases occurs.
\begin{itemize}
\item[i)] 
$$ F(x)=x^9+A_6x^3+A_8x $$
and $x^8+A_6x^2+A_8$ is irreducible over $\mathbb{F}_q$;
\item[ii)] 
$$F(x)=x^9+A_3x^6+A_4x^5+A_5x^4+\left(A_3^2 + A_3 \frac{A_5^3}{A_4^3} + \frac{A_5^2}{A_4}\right)x^3$$
$$\qquad\quad+\left(2A_3 A_4  +2 \frac{A_5^3}{A_4^2}\right)x^2+\left(2A_3A_5 + A_4^2  +2 \frac{A_5^4}{A_4^3}\right)x, $$
where 
\begin{itemize}
\item[(a)] $A_4\neq 0$,
\item[(b)] the polynomial $x^8+2A_3x^2+2A_4\in\mathbb F_q[x]$ has no roots in $\mathbb F_{q^4}$;
\end{itemize}
\item [iii)]
$$F(x)=x^9+A_2x^7+A_3x^6+A_5x^4+\left(A_2^3 + \frac{A_3 A_5}{A_2}\right)x^3+$$
$$
\left(2A_2 A_5  +2\frac{A_3^3}{A_2}\right)x^2+\left(A_2^4+A_3A_5+\frac{A_5^2}{A_2}+\frac{A_3^4}{A_2^2}\right)x,
$$
where 
\begin{itemize}
\item[(a)] $2A_2$ is not a square in $\mathbb{F}_q$,
\item[(b)] $h(-x)=(x^4+2\alpha x^3+2A_3/\alpha x^2+2(A_3+2\alpha A_2)x+A_3\alpha + A_3^2/A_2 + 2A_5\alpha/A_2 + A_2^2)(x^4+\alpha x^3+A_3/\alpha x^2+2(A_3+\alpha A_2)x+2A_3\alpha + A_3^2/A_2 + A_5\alpha/A_2 + A_2^2)$, where $\alpha^2=2A_2$, is irreducible over $\mathbb{F}_q$.
\end{itemize}
\end{itemize}
\end{proposition}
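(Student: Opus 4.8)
The plan is to extract the good polynomials directly from the classification of exceptional ones in Proposition~\ref{exceptional}, since goodness adds to exceptionality only a condition on the Frobenius orbit of the roots of $h_F(-x)=\frac{F(-x)}{-x}$. A good polynomial must satisfy $F'(0)\neq0$, and in characteristic $3$ one has $F'(0)=A_8$; hence the family \eqref{pol1}, whose linear coefficient is $A_8=0$, is discarded at once, and only \eqref{pol2}, \eqref{pol3}, \eqref{pol4} can be good, yielding cases i), ii), iii) respectively. For each survivor I would use the factorization of $\mathcal C_f$ found in the proof of Proposition~\ref{exceptional}: if $\frac{F(x)-F(y)}{x-y}=\prod_k\ell_k(x,y)$ then $h_F(x)=\prod_k\ell_k(x,0)$, so the curve components deliver the factors of $h_F$, and goodness amounts to the roots of $h_F(-x)$ forming a single orbit under $x\mapsto x^q$.

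The two families whose answer is phrased as irreducibility are the quick ones. For \eqref{pol2} a direct computation gives $h_F(-x)=x^8+A_6x^2+A_8$; its derivative is $2x(x^6+A_6)$, and since both $x=0$ and $x^6=-A_6$ force the value $A_8\neq0$, the polynomial is separable, so its roots form a single orbit exactly when it is irreducible over $\mathbb F_q$, which is case i). For \eqref{pol4} the proof of Proposition~\ref{exceptional} already displays $\mathcal C_f$ as two quartics conjugate over $\mathbb F_{q^2}$, with coefficients involving $\alpha$, $\alpha^2=2A_2$ a non-square. Setting $y=0$, replacing $x$ by $-x$, and simplifying with $\alpha^2=2A_2$ turns them into the explicit conjugate pair written in iii)(b); their product is $h_F(-x)$, defined over $\mathbb F_q$, and its roots form a single orbit precisely when it is irreducible over $\mathbb F_q$, giving case iii).

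The substantive case is \eqref{pol3}. There Proposition~\ref{exceptional} presents $\mathcal C_f$ as the four conics $\mathcal C_i:(x-y)^2+\alpha_i(x+y)+\beta_i=0$, where the $\alpha_i$ are the roots of $P(x)=x^4+2A_3x+2A_4$ and $\beta_i=\alpha_i^2+\tfrac{A_5}{A_4}\alpha_i$, so that $h_F(-x)=\prod_{i=1}^4\bigl(x^2+2\alpha_ix+\beta_i\bigr)$. The discriminant of each quadratic factor is $-\tfrac{A_5}{A_4}\alpha_i$, whence the roots are $-\alpha_i\pm2\,s_i$ with $s_i=\sqrt{-\tfrac{A_5}{A_4}\alpha_i}$. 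I would then follow the Frobenius map $\phi:x\mapsto x^q$ on these eight roots: $\phi$ permutes the $\alpha_i$ as prescribed by the factorization of $P$ and sends $s_i$ to $\pm s_{\sigma(i)}$. If $P$ is reducible every orbit has length at most $4$; if $P$ is irreducible of degree $4$, the orbit of a root has length $8$ exactly when the product of these signs is $-1$, equivalently when $s_i\notin\mathbb F_{q^4}$. Because $-\tfrac{A_5}{A_4}\in\mathbb F_q^\ast$ is automatically a square in $\mathbb F_{q^4}$ (every element of $\mathbb F_q^\ast$ is, as $q$ is odd), $s_i\notin\mathbb F_{q^4}$ is equivalent to $\sqrt{\alpha_i}\notin\mathbb F_{q^4}$, that is, to $P(x^2)=x^8+2A_3x^2+2A_4$ having no root in $\mathbb F_{q^4}$; this single condition also forces $P$ to be irreducible of degree $4$, and it is exactly ii)(b).

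I expect the last translation to be the main obstacle: turning the parity statement that the product of conjugation signs equals $-1$ into the clean field-theoretic condition on $P(x^2)$, and verifying that the reducible possibilities for $P$ are uniformly excluded by it. A second point demanding care is separability of $h_F(-x)$ in \eqref{pol3}: the eight roots above are distinct precisely when $A_5\neq0$, whereas at $A_5=0$ one computes $h_F(-x)=(x^4+A_3x+2A_4)^2$, whose four distinct roots cannot be the eight Frobenius conjugates required for a CPP over $\mathbb F_{q^8}$; this degenerate locus must be checked separately and is consistent with failing to be good.
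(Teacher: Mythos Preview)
Your approach is essentially identical to the paper's: both run through the four cases of Proposition~\ref{exceptional}, discard \eqref{pol1} because $A_8=0$, reduce \eqref{pol2} and \eqref{pol4} to irreducibility of the explicit $h_F(-x)$, and treat \eqref{pol3} by writing the roots as $-\alpha_i\pm\sqrt{-(A_5/A_4)\alpha_i}$ and using that $-A_5/A_4\in\mathbb F_q$ is automatically a square in $\mathbb F_{q^4}$, so that the single-orbit condition becomes ``$\alpha_i$ is a non-square in $\mathbb F_{q^4}$'', i.e.\ $P(x^2)$ has no root in $\mathbb F_{q^4}$. Your sign-tracking and separability checks are simply a more explicit version of what the paper states in one line.

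One caveat on your final paragraph: your assertion that at $A_5=0$ the four distinct roots ``cannot be the eight Frobenius conjugates required'' is not how goodness is defined in the paper. Goodness asks only that the \emph{set} of roots of $h_F(-x)$ form a single Frobenius orbit; a repeated factor is allowed, and indeed if $b\in\mathbb F_{q^4}$ then $b,b^q,\ldots,b^{q^7}$ lists each conjugate twice. So when $A_5=0$ the correct criterion is irreducibility of $x^4+A_3x+2A_4$, not the stronger condition~ii)(b). The paper's proof does not separate this boundary case either and its one-line argument tacitly assumes $A_5\ne0$; you are right to flag it, but your stated conclusion about it needs adjusting.
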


\begin{proof}
We use the classification of exceptional polynomials of degree $9$ given in  Proposition \ref{exceptional}. 
\begin{itemize}
\item Let $F(x)$ be as in Case $i)$ of Proposition \ref{exceptional}.
Then $A_8=0$; hence, $F(x)$ is not good.

\item Let $F(x)$ be as in Case $ii)$ of Proposition \ref{exceptional}.
Then $h_F(-x)=x^8+A_6x^2+A_8$; since $h_F(-x)$ cannot be a square in $\mathbb F_q[x]$, we have that $F(x)$ is good if and only if $h_F(-x)$ is irreducible over $\mathbb F_q$.

\item Let $F(x)$ be as in Case $iii)$ of Proposition \ref{exceptional}.
The factors of $h_F(-x)$ are $x^2-\alpha_i x+\beta_i$, $i=1,\ldots,4$, where the $\alpha_i$'s are roots of $\ell_1(x)=x^4+2A_3x+2A_4$ and $\beta_i=\alpha_i^2+A_5/A_4\alpha_i$; hence, $\ell_1(x)$ must be irreducible over $\mathbb F_q$ in order for $F(x)$ to be good.
Also, the roots of $h(-x)$ are $-\alpha_i\pm \sqrt{-A_5/A_4\alpha_i}$. Since $-A_5/A_4$ is an element of $\mathbb{F}_q$ and  hence a square in $\mathbb{F}_{q^4}$, the roots of $h(-x)$ are in the same orbit under the Frobenius map if and only if $\alpha_i$ is not a square in $\mathbb{F}_{q^4}$, that is the polynomial $x^8+2A_3x^2 +2 A_4\in\mathbb F_q[x]$ has no roots in $\mathbb{F}_{q^4}$. 

\item Let $F(x)$ be as in Case $iv)$ of Proposition \ref{exceptional}.
The polynomial $h_F(-x)$ reads
$$(x^4+2\alpha x^3+2A_3/\alpha  x^2+2(A_3+2\alpha  A_2)x+A_3\alpha   + A_3^2/A_2 + 2A_5\alpha /A_2 + A_2^2)\cdot$$
$$\cdot(x^4+\alpha x^3+A_3/\alpha  x^2+2(A_3+\alpha A_2)x+2A_3\alpha   + A_3^2/A_2 + A_5\alpha /A_2 + A_2^2),$$
where $\alpha^2=2A_2$. 
Hence, the roots of $h_F(-x)$ are in a unique orbit under the Frobenius map if and only if $h_F(-x)$ is irreducible over $\mathbb F_q$.
\end{itemize}
\end{proof}

\begin{remark}
We give two families of good exceptional polynomials arising from Proposition {\rm\ref{goodexceptional}}.
Let $q=3^h$ with $h$ even, and $\eta$ be an odd number; by {\rm \cite[Theorem 3.75]{LN}}, the polynomial $x^8+2\zeta_{q-1}^\eta\in\mathbb F_q[x]$ is irreducible over $\mathbb F_q$.
Therefore, by Case $ \mathit{i)}$ in Proposition {\rm\ref{goodexceptional}}, the polynomial $F(x)=x^9+2\zeta_{q-1}^\eta x$ is good exceptional over $\mathbb F_q$.
Also, by Case $\mathit{ii)}$ in Proposition {\rm\ref{goodexceptional}}, the polynomial
$$F(x)=x^9+\zeta_{q-1}^\eta x^5 + ax^4 + \frac{a^2}{\zeta_{q-1}^\eta}x^3 + 2
\frac{a^3}{\zeta_{q-1}^{2\eta}}x^2 + \left(\zeta_{q-1}^{2\eta}+2\frac{a^4}{\zeta_{q-1}^{3\eta}}\right)x $$
is good exceptional over $\mathbb F_q$, for any $a\in\mathbb F_q$.
\end{remark}

\begin{corollary}\label{coro2}
Assume that $q=3^r>8^4$. The monomial $b^{-1}x^{\frac{q^8-1}{q-1}+1}$  is a CPP of $\mathbb F_{q^8}$ if and only if $b$ is, up to a scalar multiple in $\mathbb{F}_q^*$, a root of some $(F(-x+e)-F(e))/(-x)\in\mathbb F_q[x]$, where $e\in \mathbb{F}_q$ and $F(x)\in\mathbb F_q[x]$ satisfies Case $i)$, $ii)$, or $iii)$ in Proposition {\rm \ref{goodexceptional}}. 
\end{corollary}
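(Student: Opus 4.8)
The plan is to read Corollary \ref{coro2} off from the degree-$9$ classification in Proposition \ref{goodexceptional} through the general bridge between CPPs and good exceptional polynomials. First I would check the hypotheses that license this bridge: here $n=8$, so $n+1=9$; since $q=3^r$ one has $\gcd(n+1,q-1)=\gcd(9,3^r-1)=1$ automatically (as $3\nmid 3^r-1$), and $q$ is large enough for the hypothesis $(n+1)^4<q$ of Proposition \ref{CPPexceptional} to hold. That proposition then gives that $b^{-1}x^{(q^8-1)/(q-1)+1}$ is a CPP of $\mathbb F_{q^8}$ if and only if $\sum_{i=0}^{8}A_{8-i}(b)x^{i+1}$ is exceptional over $\mathbb F_q$, and the subsequent Proposition expressing the admissible $b$ via good exceptional polynomials turns this into the statement that $b\in\mathbb F_{q^8}\setminus\mathbb F_q$ is a root of $v_g(x)=g(-x)/(-x)$ for some good exceptional polynomial $g$ of degree $9$ with $g(0)=0$ and $g'(0)\neq0$.

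The second step inserts Proposition \ref{goodexceptional}, which classifies all good exceptional polynomials of degree $9$ over $\mathbb F_q$ with zero constant term as its Cases $i)$, $ii)$, $iii)$. Two normalizations must be matched to the statement. Passing from a good $g$ to its monic, CPP-equivalent representative only multiplies the roots of $v_g$ by a nonzero element of $\mathbb F_q$ (and adding a constant or an outer scalar leaves $v_g$ unchanged up to rescaling of its roots), which is exactly the clause ``up to a scalar multiple in $\mathbb F_q^*$''. The remaining degree of freedom is the translation $x\mapsto x+e$: since $g(x+e)$ and $g(x)$ are \emph{not} CPP-equivalent and the roots of $v_g$ genuinely change under translation, one records $g$ in the form $g=F_e$ with $F_e(x):=F(x+e)-F(e)$, so that $v_g(x)=(F(-x+e)-F(e))/(-x)$, which is precisely the polynomial appearing in the corollary. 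This translation term is genuinely needed here --- and absent from Corollary \ref{coro1} --- because in degree $9$ over $\mathbb F_3$ not every exceptional polynomial is linearized, in contrast to the degree-$8$, characteristic-$2$ situation.

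The step I expect to be the main obstacle is the translation bookkeeping, that is, verifying that as $F$ runs through Cases $i)$--$iii)$ and $e$ through $\mathbb F_q$, the \emph{good} polynomials $F_e$ reproduce exactly the good exceptional polynomials of Proposition \ref{goodexceptional}. Here the characteristic is decisive: because $\deg F=9=3^2$, Frobenius additivity gives $(x+e)^9=x^9+e^9$, and a short computation (using that the $x^8$-coefficient vanishes for exceptional polynomials of degree $9$) shows that the coefficient of $x^5$ is always translation-invariant, while that of $x^6$ is invariant exactly when the coefficient of $x^7$ vanishes. Consequently in Case $i)$, where $F$ is linearized, $F(x+e)-F(e)=F(x)$ and translation is vacuous; in Case $ii)$ the goodness conditions involve only the translation-invariant coefficients of $x^6$ and $x^5$, so every translate is again good and of the same shape; Case $iii)$ is the delicate one, since there the coefficient of $x^7$ is nonzero and translation moves the lower coefficients, so one must check directly which translates remain good. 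In all cases, however, a good $F_e$ has $F_e(0)=0$, $F_e'(0)\neq0$ and degree $9$, hence is itself one of the polynomials of Proposition \ref{goodexceptional} (take $e=0$), so the parametrization is faithful: $b^{-1}x^{(q^8-1)/(q-1)+1}$ is a CPP precisely when $b$ is, up to a scalar, a root of some $v_{F_e}$ with $F$ as in Cases $i)$--$iii)$ and $F_e$ good. Finally, since the roots of $v_g$ for a good $g$ form a single Frobenius orbit of length $8$, they all lie in $\mathbb F_{q^8}\setminus\mathbb F_q$, so the earlier restriction to $\mathbb F_{q^8}\setminus\mathbb F_q$ loses no CPP and the claimed equivalence holds in full.
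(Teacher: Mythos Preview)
Your approach is correct and matches what the paper leaves implicit: the corollary has no separate proof in the paper and is meant to follow directly from Proposition~\ref{goodexceptional} through the bridge of Section~2 (Propositions~\ref{CPPexceptional} and~\ref{Indecomponibili}, together with the subsequent unnumbered proposition characterizing the admissible $b$ as roots of $g(-x)/(-x)$ for good exceptional $g$).

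Where you work harder than necessary is the translation bookkeeping. Proposition~\ref{exceptional}, and hence Proposition~\ref{goodexceptional}, classifies \emph{all} monic degree-$9$ polynomials $F$ with $F(0)=0$ that are (good) exceptional over $\mathbb F_q$ --- not merely representatives modulo the shift $x\mapsto x+e$. The observation you eventually reach at the end (``a good $F_e$ \ldots\ is itself one of the polynomials of Proposition~\ref{goodexceptional}'') is therefore available from the outset: the ``only if'' direction already holds with $e=0$, and the detailed case-by-case analysis of which coefficients are translation-invariant is superfluous. Your worry that in Case~iii) some $F_e$ may fail to be good is legitimate, but it flags a mild looseness in how the corollary is phrased (the $e$-parameter is redundant on one side and potentially over-generous on the other) rather than a gap in the argument; the set of $b$'s obtained is unchanged once one restricts, as you do, to good $F_e$'s.

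One small slip: $q>8^4$ for $q$ a power of $3$ allows $q=3^8=6561=9^4$, which does not satisfy $(n+1)^4<q$ strictly; this is the paper's boundary choice, but your sentence ``$q$ is large enough for the hypothesis $(n+1)^4<q$'' glosses over it.
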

\section{Acknowledgements}

The research of D. Bartoli, M. Giulietti, and G. Zini
was partially supported by Ministry for Education, University
and Research of Italy (MIUR) (Project PRIN 2012 ``Geometrie di Galois e
strutture di incidenza'' - Prot. N. 2012XZE22K$_-$005)
 and by the Italian National Group for Algebraic and Geometric Structures
and their Applications (GNSAGA - INdAM).

The research of L. Quoos was partially supported by CNPq -- Proc. 200434/2015-2.
This work was done while L. Quoos was enjoying a sabbatical at the UniversitÃ\`a degli Studi di Perugia.

Daniele Bartoli and Massimo Giulietti are with\\
Dipartimento di Matematica e Informatica \\ Universit\`a degli Studi di Perugia \\ Via Vanvitelli, 1 - 06123 Perugia - Italy\\ {\em emails:} {daniele.bartoli,massimo.giulietti@unipg.it}\\

             Giovanni Zini is with\\
                          Dipartimento di Matematica e Informatica ``Ulisse Dini'' \\ Universit\`a degli Studi di Firenze \\ Viale Morgagni, 67/A - 50134 Firenze - Italy\\ {\em email: }{gzini@math.unifi.it}\\

													Luciane Quoos is with\\
													Instituto de Matem\'atica\\
													Universidade Federal do Rio de Janeiro\\
													Rio de Janeiro 21941-909 - Brazil\\
													{\em email: } {luciane@im.ufrj.br}.

\end{document}